\definecolor{shade}{gray}{0.8}
        {
          \raggedright
        \setlength{\rightmargin}{\leftmargin}
        \setlength{\itemsep}{-12pt}
        \setlength{\parsep}{20pt}
        \begin{lrbox}{\@tempboxa}%
        \begin{minipage}{\linewidth-2\fboxsep}
        }%
        {
        \end{minipage}%
        \end{lrbox}%
        \fcolorbox{black}{shade}{\usebox{\@tempboxa}}\newline
        }%
\newtheorem{theorem}{Theorem}
\newtheorem{lemma}{Lemma}
\newcommand{\iu}{\mathrm{i}} 
\renewcommand{\eqref}[1]{\hyperref[#1]{(\ref*{#1})}}
\newcommand{\dd}{\mathrm{d}}
\newcommand{\ee}{\mathbf{e}}
\newtheorem{remark}{Remark}
\newcommand*{\pref}[1]{\hyperref[#1]{(\ref*{#1})}}
\newcommand*{\refpref}[2]{\hyperref[#2]{\ref*{#1}(\ref*{#2})}}
\definecolor{amethyst}{rgb}{0.6, 0.4, 0.8}
\definecolor{applegreen}{rgb}{0.55, 0.71, 0.0}
\definecolor{aqua}{rgb}{0.0, 1.0, 1.0}
\definecolor{asparagus}{rgb}{0.53, 0.66, 0.42}
\definecolor{amber(sae/ece)}{rgb}{1.0, 0.49, 0.0}
 	\definecolor{armygreen}{rgb}{0.29, 0.33, 0.13}
	\definecolor{shitbrown}{rgb}{0.43, 0.21, 0.1}
	\definecolor{brightpink}{rgb}{1.0, 0.0, 0.5}
	\definecolor{brightube}{rgb}{0.82, 0.62, 0.91}
	 	\definecolor{byzantine}{rgb}{0.74, 0.2, 0.64}
		\definecolor{chartreuse(web)}{rgb}{0.5, 1.0, 0.0}
\title{$\alpha$-stable L\'evy processes entering the half space or a slab}
\author{ Andreas E. Kyprianou\thanks{
Department of Statistics
University of Warwick
Coventry
CV4 7AL, UK. E-mail: \texttt{andreas.kyprianou@warwick.ac.uk}
}, \, Sonny Medina\thanks{
Department of Mathematical Sciences
University of Bath
Bath
 BA2 7AY, UK. E-mail: \texttt{samj20@bath.ac.uk}.} \,  and  \, Juan Carlos Pardo\thanks{CIMAT A. C., Calle Jalisco s/n, Col. Valenciana, A. P. 402, C.P. 36000, Guanajuato, Gto., Mexico. Email \texttt{jcpardo@cimat.mx}}
}
\begin{document}

\maketitle
\begin{abstract} 
Recently a series of publications, including e.g. \cite{DEEP1, DEEP2, DEEP3, watson, KJCbook}, considered a number of new fluctuation identities for $\alpha$-stable L\'evy processes in one and higher dimensions by appealing to underlying Lamperti-type path decompositions.  In the setting of $d$-dimensional isotropic processes, \cite{DEEP3} in particular, developed so called $n$-tuple laws for first entrance and exit of balls.  Fundamental to these works is the notion that the paths can be decomposed via generalised spherical polar coordinates revealing an underlying Markov Additive Process (MAP) for which a more advanced form of excursion theory (in the sense of \cite{maisonneuve}) can be exploited. 

Inspired by this approach, we give a different decomposition of the $d$-dimensional isotropic $\alpha$-stable L\'evy processes in terms of orthogonal coordinates. Accordingly we are able to develop a number of $n$-tuple laws for first entrance into a half-space bounded by an $\mathbb{R}^{d-1}$ hyperplane, expanding on existing results of \cite{byczkowski2009bessel, tamura2008formula}. This gives us the opportunity to numerically construct the law of first entry of the process into a slab of the form $(-1,1)\times \mathbb{R}^{d-1}$ using a `walk-on-half-spaces' Monte Carlo approach in the spirit of the `walk-on-spheres' Monte Carlo method given in \cite{kyprianou2018unbiased}.

\medskip

\noindent {\bf Key words:} Stable processes, first passage problems, walk-on-spheres Monte Carlo
\medskip

\noindent {\bf MSC 2020:}  Primary 60G18, 60G52; Secondary 60G51
\end{abstract}

\section{Introduction}

Let $X=(X_t, t \geq 0)$ be a $d$-dimensional  $\alpha$-stable  L\'evy process (henceforth referred to as just a {\it stable process}) with probabilities $\mathbb{P}: =(\mathbb{P}_x, x \in \mathbb{R}^d)$. This means that $X$ has c\`adl\`ag paths with stationary and independent increments as well as respecting a property of  self-similarity: There is an $\alpha>0$ such that, for $c>0,$ and $x \in {\mathbb{R}^d},$
under $\mathbb{P}_x$, the law of  $(cX_{c^{-\alpha}t}, t \geq 0)$ is equal to  $\mathbb{P}_{cx}$.
It turns out that stable L\'evy processes necessarily have the scaling index $\alpha\in (0,2]$. The case $\alpha=2$ pertains to a standard $d$-dimensional Brownian motion, thus has a continuous path. Similar results are possible  in the diffusive setting but we nonetheless restrict ourselves  to  the isotropic pure jump setting of $\alpha\in(0,2)$  in dimension $d\geq2$ in order to bring forward features of the proof which are dependent on the jump structure. 

\smallskip

 To be more precise, this means, for all orthogonal transformations $U:\mathbb{R}^d \mapsto \mathbb{R}^d$ and $x \in \mathbb{R}^d,$ 
\[
 \quad \textit{the law of} \quad (UX_t, t \geq 0) \textit{ under} \quad \mathbb{P}_x  \textit{ is equal to }  (X_t,t\geq 0) \textit{ under } \mathbb{P}_{Ux}.
 \]

The stable L\'evy  process has a the jump measure $\Pi$ that satisfies 
\begin{equation}
\Pi(B) = \frac{2^{\alpha} \Gamma((d+\alpha)/2)}{\pi^{d/2} |\Gamma(-\alpha/2)|} \int_B \frac{1}{|y|^{\alpha+d}} {\rm d} y, \quad B \subseteq \mathcal{B}(\mathbb{R}^d).
\label{Pi}
\end{equation}
{The constant in the definition of $\Pi(B)$ can be arbitrary, however, our choice corresponds to the  one that allows us to identify the characteristic exponent L\'evy process as}
\begin{equation} \label{charexponent}
\Psi(\theta)=-\frac{1}{t}\log \mathbb{E} ({\rm e}^{\iu\theta \cdot X_t}) =|\theta|^{\alpha}, \quad \theta \in \mathbb{R}^d,
\end{equation} 
where we write $\mathbb{P}$ in preference to $\mathbb{P}_0$; more precisely, the coefficient of $|
\theta|^\alpha$ is one.

\smallskip

We are interested in describing how the paths of $X = (X^{(1)},\cdots, X^{(d)})$ behave in relation to hyperplanes and the half-spaces that they bound. 
Because $X$ has stationary and independent increments and is isotropic, it suffices to focus on the hyperplane $\mathbb{H}_0: = \{z\in\mathbb{R}^d: z^{(1)}=0\}$ and the two open half spaces that it partitions, $\mathbb{H}_{0\downarrow}:=\{z \in \mathbb{R}^{d}: z^{(1)} <0 \}$ and $\mathbb{H}_{0\uparrow}:=\{z \in \mathbb{R}^{d}: z^{(1)} >0 \}$.

As such, we need to point out that a subset of $k \in \{1,2, \dots ,d\}$ coordinates of $X$, under $\mathbb{P}_x$, is in fact a $k$-dimensional isotropic stable  process. This follows by evaluating \eqref{charexponent} on the vector $\theta_k:=(\theta^{(1)},\theta^{(2)},\dots,\theta^{(k)},0,\dots,0)$ and noting that $$\Psi(\theta_k)=|\theta_k|^{\alpha}.$$ In particular $X^{(1)}$ is a symmetric real valued stable  process. Moreover, $X^{(2:d)}: = (X^{(2)}, \cdots, X^{(d)})$ is an isotropic stable process in dimension $d-1$. By using  isotropy, the projective properties of  $X$ onto lower dimensions, and recalling some path properties of symmetric real-valued stable processes, it is worth noting  the following facts, which will be important for our forthcoming computations. 

\begin{itemize}
   \item The isotropic stable process touches $\mathbb{H}_{0}$ (i.e. $X^{(1)}$ touches 0) in finite time with probability one when $\alpha \in (1,2)$ and with probability zero when $\alpha \in (0,1]$. Moreover, in the almost sure sense, when $\alpha = 1$, $\liminf_{t\to\infty}|X^{(1)}|=0$, whereas, when $\alpha\in(0,1)$, $\liminf_{t\to\infty}|X^{(1)}|>0$.
    
    \item Due to the oscillatory behaviour of $X^{(1)}$, i.e. $\limsup_{t\to \infty} X^{(1)}_t = -\liminf_{t\to \infty} X^{(1)}_t=+\infty$, for any $x\in \mathbb{R}^d$ the process $\mathbb{P}_x$-almost surely visits $\mathbb{H}_{0\downarrow}:=\{z \in \mathbb{R}^{d}: z^{(1)} <0 \}$ and $\mathbb{H}_{0\uparrow}:=\{z \in \mathbb{R}^{d}: z^{(1)} >0 \}$ the `lower' and `upper' half-spaces an infinite number of times.

    \item The process $X$ started in $x=(x^{(1)},\dots,x^{(d)})$ is regular for both the half spaces $(x^{(1)},\infty)\times\mathbb{R}^{d-1}$ and $(-\infty,x^{(1)})\times\mathbb{R}^{d-1}$. That is, for $\tau_{x^{(1)}}^{\wedge}:=\inf\{t>0: X^{(1)}>x^{(1)}\}$ and $\tau_{x^{(1)}}^{\vee}:=\inf\{t>0: X^{(1)}<x^{(1)}\}$, we necessarily have $\mathbb{P}_x(\tau_{x^{(1)}}^{\wedge}=0)=\mathbb{P}_x(\tau_{x^{(1)}}^{\vee}=0)=1$.

    \item Crossing a hyperplane does not occur by creeping. This means, for $r\neq x^{(1)}$, where $x^{(1)}$ is the first component of the point of issue, $x\in\mathbb{R}^d$, of $X$,  with $\tau_r^{\vee}=\inf\{t>0:X_t^{(1)}<r\}$ and $\tau_r^{\wedge}=\inf\{t>0:X_t^{(1)}>r\}$,  $$\mathbb{P}_x(X_{\tau_r^{\vee}}^{(1)} < r)=\mathbb{P}_x(X_{\tau_r^{\wedge}}^{(1)} > r)=1.$$
    
\end{itemize}

Our results, given in the next two sections, pertain to the first passage problem around the aforementioned stopping times $\tau_r^{\vee}=\inf\{t>0:X_t^{(1)}<r\}$ and $\tau_r^{\wedge}=\inf\{t>0:X_t^{(1)}>r\}$. With the above properties in mind we will develop a theory of excursions from a directional minimum/maximum, which are related to the excursions from a hyperplane studied by \cite{burdzy}. Our excursion process is `indexed' by the set of times at which the process reaches a new minimum/maximum in the direction of $X^{(1)}$. What is important however is that such excursions are now indexed by the value of $X^{(2:d)}$ at the time the minimum/maximum is reached.  

\smallskip

The results we will achieve for first entry into the half-space $\{x\in\mathbb{R}^d: x^{(1)}<0\}$ will give us some interesting insights into the much more difficult problem of understanding the first passage problem into the slab $S : = (-1,1)\times \mathbb{R}^{d-1}$, based on the stopping time 
\[
\tau_S  = \inf\{t>0: X_t \in S\}= \inf\{t>0: X^{(1)}_t \in (-1,1)\}.
\]
 That is, we are interested in the law of $X_{\tau_S }$. This problem was recently solved in one dimension by \cite{watson}, in other words the distribution of $X^{(1)}_{\tau_S }$ is already known. Understanding the distribution of the additional variables $X^{(2:d)}_{\tau_S }$ and their correlation to $X^{(1)}_{\tau_S }$ (which is specifically unlike the setting of Brownian motion where there is independence) is a much harder problem.

\smallskip

Although we have been unable to provide a closed form expression for the distribution of $X_{\tau_S}$, we will show how the law of $(X^{(1)}_{t}, t\leq \tau_S )$ and $X^{(2:d)}_{\tau_S }$ are correlated through a conditional dependency which relies on the $(d-1)$-dimensional Cauchy distribution. Moreover, we introduce a new Monte Carlo method, based on the method of `walks-on-spheres' (cf. \cite{kyprianou2018unbiased}), albeit now a method of `walks-on-half-spaces', which allows us to numerically compute the law of $X_{\tau_S}$.

\section{First crossing of a hyperplane}
Our first result considers the one-sided point of closest reach to the hyperplane $\mathbb{H}_0: = \{x\in\mathbb{R}^d : x^{(1)}=0\}$, before crossing it for the first time. Specifically, we are interested in $X_{G(\tau_0^{\vee})}$, where 
    $$G(\tau_r^{\vee})=\sup\{s<\tau_r^{\vee}:X^{(1)}_s=\inf_{u\leq \tau_r^{\vee}} X^{(1)}_u\},$$
and we recall  $\tau_0^{\vee}:=\inf\{s>0:X_{s}^{(1)}<0\}$.

\begin{theorem}[One-sided point of closest reach to $\mathbb{H}_0$ at first crossing] \label{thm:pcr}
Let $x,y\in \mathbb{H}_{0\uparrow}$,  then
\begin{equation} \label{PCR}
    \mathbb{P}_x(X_{G(\tau_0^{\vee})}\in {\rm d}y ) =  C_{\alpha,d}  \frac{|y^{(1)}-x^{(1)}|^{\alpha/2}}{|y^{(1)}|^{\alpha/2}}|x-y|^{-d}{\rm d}y \quad \text{ for } 0<y^{(1)}<x^{(1)}, 
\end{equation}\\
where $C_{\alpha,d}=\pi^{-(d/2+1)}\Gamma(d/2)\sin(\alpha\pi/2)$.
\end{theorem}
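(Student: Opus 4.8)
The plan is to realise $X_{G(\tau_0^\vee)}$ through the descending ladder structure of the first coordinate $X^{(1)}$, decorated by the remaining coordinates $X^{(2:d)}$ as anticipated in the introduction, and then to read the law off from a first-passage (undershoot) computation for the resulting Markov additive process.

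\smallskip

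\noindent\emph{Step 1: reduction to a ladder process.} Let $I_t=\inf_{s\le t}X^{(1)}_s$, let $L$ be a local time of $X^{(1)}$ at its running infimum and $\hat L^{-1}$ its right-continuous inverse, and set
\[
\hat H_t:=x^{(1)}-I_{\hat L^{-1}_t},\qquad \Upsilon_t:=X^{(2:d)}_{\hat L^{-1}_t}-x^{(2:d)},\qquad t\ge0 .
\]
Because $X^{(1)}$ is a symmetric stable process — oscillating and not creeping downwards — the strong Markov property together with translation invariance in the $X^{(2:d)}$-coordinate shows that $(\hat H,\Upsilon)$ is a Lévy process on $[0,\infty)\times\mathbb{R}^{d-1}$ with $\hat H$ a subordinator. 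Putting $T:=\inf\{t:\hat H_t>x^{(1)}\}$ (the local time at which $X^{(1)}$ first drops below $0$, i.e.\ at which $\tau_0^\vee$ occurs) one checks that $G(\tau_0^\vee)=\hat L^{-1}_{T-}$, so that
\[
X^{(1)}_{G(\tau_0^\vee)}=x^{(1)}-\hat H_{T-}\in(0,x^{(1)}),\qquad X^{(2:d)}_{G(\tau_0^\vee)}=x^{(2:d)}+\Upsilon_{T-}\,;
\]
indeed, beyond the last ladder epoch $\hat L^{-1}_{T-}$ the first coordinate performs a final positive excursion above its infimum $m:=x^{(1)}-\hat H_{T-}$ and then exits below $0$ by a jump, so $m$ is recorded for the last time at $\hat L^{-1}_{T-}$. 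The theorem is thus equivalent to identifying the joint law of $(\hat H_{T-},\Upsilon_{T-})$, the position of $(\hat H,\Upsilon)$ immediately before $\hat H$ first exceeds $x^{(1)}$.

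\smallskip

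\noindent\emph{Step 2: the ladder process is Cauchy-subordinated (the main point).} The key structural fact — the ``conditional dependency through the $(d-1)$-dimensional Cauchy distribution'' announced in the introduction — is that, normalising the local time so that $\hat H$ is the $(\alpha/2)$-stable subordinator with $\mathbb{E}[\mathrm{e}^{-\beta\hat H_t}]=\mathrm{e}^{-t\beta^{\alpha/2}}$, one has
\[
\mathbb{E}\!\left[\mathrm{e}^{-\beta\hat H_t+\iu\langle\theta,\Upsilon_t\rangle}\right]=\mathrm{e}^{-t(\beta+|\theta|)^{\alpha/2}},\qquad \beta\ge0,\ \theta\in\mathbb{R}^{d-1},
\]
equivalently $\Upsilon_t\overset{d}{=}\mathcal C_{\hat H_t}$ for a standard $(d-1)$-dimensional isotropic Cauchy process $\mathcal C$ run independently along the clock $\hat H$. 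I would obtain this either from the Wiener--Hopf factorisation of $X$ read in the $X^{(1)}$-direction (the factor corresponding to the descending ladder process of $X^{(1)}$ carrying $X^{(2:d)}$), or, more concretely, from the subordinate-Brownian-motion representation $X=B\circ S$ with $S$ an $(\alpha/2)$-stable subordinator: for Brownian motion the statement is classical, since the first-passage times of the linear motion $B^{(1)}$ form the $1/2$-stable subordinator and $B^{(2:d)}$ evaluated along it is a Cauchy process, and one then has to control how the clock $S$ interacts with this ladder structure. Granting it, the potential measure of $(\hat H,\Upsilon)$ is
\[
U(\d h,\d z)=\frac{h^{\alpha/2-1}}{\Gamma(\alpha/2)}\,\d h\;\frac{\Gamma(d/2)}{\pi^{d/2}}\,\frac{h}{(h^2+|z|^2)^{d/2}}\,\d z,\qquad h>0,\ z\in\mathbb{R}^{d-1}.
\]

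\smallskip

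\noindent\emph{Step 3: undershoot and the constant.} Since only the $\hat H$-component of a jump of $(\hat H,\Upsilon)$ governs the event $\{\hat H_T>x^{(1)}\}$, the compensation formula (Lévy system) for $(\hat H,\Upsilon)$ gives
\[
\mathbb{P}\!\left((\hat H_{T-},\Upsilon_{T-})\in(\d h,\d z)\right)=U(\d h,\d z)\;\overline{\nu}_{\hat H}\bigl(x^{(1)}-h\bigr),\qquad 0<h<x^{(1)},
\]
with $\overline{\nu}_{\hat H}(r)=\nu_{\hat H}((r,\infty))=r^{-\alpha/2}/\Gamma(1-\alpha/2)$ the tail of the Lévy measure of $\hat H$. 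Inserting Step 2 and changing variables by $h=x^{(1)}-y^{(1)}$, $z=y^{(2:d)}-x^{(2:d)}$ (so $h^2+|z|^2=|x-y|^2$, $x^{(1)}-h=y^{(1)}$, and using $1/(\Gamma(\alpha/2)\Gamma(1-\alpha/2))=\sin(\pi\alpha/2)/\pi$) yields
\[
\mathbb{P}_x\!\left(X_{G(\tau_0^\vee)}\in\d y\right)=\frac{\Gamma(d/2)\sin(\pi\alpha/2)}{\pi^{d/2+1}}\,\frac{|y^{(1)}-x^{(1)}|^{\alpha/2}}{|y^{(1)}|^{\alpha/2}}\,|x-y|^{-d}\,\d y
\]
for $0<y^{(1)}<x^{(1)}$, which is exactly \eqref{PCR} together with the stated value of $C_{\alpha,d}$. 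As a consistency check, $C_{\alpha,d}$ is also forced by $\mathbb{P}_x(X_{G(\tau_0^\vee)}\in\mathbb{H}_{0\uparrow})=1$: integrating the right-hand side over $y^{(2:d)}\in\mathbb{R}^{d-1}$ produces $(\pi^{d/2}/\Gamma(d/2))\,(x^{(1)}-y^{(1)})^{\alpha/2-1}(y^{(1)})^{-\alpha/2}$, and a further integral over $y^{(1)}\in(0,x^{(1)})$ gives the Beta integral $\mathrm{B}(\alpha/2,1-\alpha/2)=\pi/\sin(\pi\alpha/2)$.

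\smallskip

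\noindent The hard part is Step 2: identifying the decorated descending ladder process of $(X^{(1)},X^{(2:d)})$ and verifying its Cauchy-subordinated structure, which requires genuine excursion theory (or a careful analysis of the stable time-change acting on the Brownian ladder structure). Steps 1 and 3 are then essentially bookkeeping — pinning down $G(\tau_0^\vee)$ as the last ladder epoch, one application of the compensation formula, and a Beta-function evaluation.
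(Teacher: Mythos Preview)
Your overall architecture is correct and coincides with the paper's equation~\eqref{potential}: both you and the paper reduce the problem to computing the potential $U^-_x(\dd y)$ of the decorated descending ladder process $(\hat H,\Upsilon)=(H^{(1)-},H^{(2:d)-})$ and then multiply by the tail $\Pi^{(1)}_-(y^{(1)},\infty)=(y^{(1)})^{-\alpha/2}/\Gamma(1-\alpha/2)$. The divergence is in how $U^-_x$ is obtained, and this is where your proposal has a genuine gap.

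Your Step~2 asserts the exponent $(\beta+|\theta|)^{\alpha/2}$ for $(\hat H,\Upsilon)$, equivalently that $\Upsilon$ is an isotropic $(d-1)$-Cauchy process run along the clock $\hat H$. You correctly flag this as the hard part, but neither of your two suggested routes is actually carried out. The Wiener--Hopf route amounts to justifying the algebraic identity $|\xi|^\alpha=(|\xi'|-\iu\xi_1)^{\alpha/2}(|\xi'|+\iu\xi_1)^{\alpha/2}$ as a \emph{directional} Wiener--Hopf factorisation of the $d$-dimensional exponent, which requires a MAP-level factorisation/uniqueness theorem that you do not supply. The subordinate-Brownian-motion route is more delicate than your parenthetical suggests: the ladder epochs of $X^{(1)}=B^{(1)}\circ S$ are \emph{not} in simple bijection with those of $B^{(1)}$, so ``controlling how the clock $S$ interacts with the ladder structure'' is a real obstacle, not a formality. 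As written, Step~2 is an assumption rather than a proof.

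The paper avoids this by bootstrapping from results it has already established by the flat-earth method. It first proves Lemma~\ref{bgrfe} (the overshoot density at first crossing of a hyperplane) independently, and then recovers the density of $X_{G(\tau_0^\vee)}$ at level $r$ as the limit $\lim_{\delta\to0}\Delta_r^\delta f(x)$ of a difference quotient, conditioning on first entry into $\mathbb{H}_{r\downarrow}$ via Lemma~\ref{bgrfe} and carefully controlling the inner integral (the claim \eqref{first claim} and the estimate \eqref{mess}). In effect the paper \emph{derives} the potential $U^-_x$ (cf.\ \eqref{ascpot}) and thus the Cauchy-subordinated structure you want to assume; indeed the orthogonal Wiener--Hopf factorisation appears only at the end of the paper (Theorem~4) as a \emph{consequence}, not as an input. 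Your route is conceptually cleaner and, were Step~2 in hand, would make the computation of $C_{\alpha,d}$ a two-line Beta-integral; but to make it self-contained you must either prove the directional factorisation from scratch or identify $U^-_x$ by some other independent means.
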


The concept of `point of closest reach' comes from recent work in \cite{DEEP3}, 
which considered the point on the path of an isotropic stable process which is radially closest to the origin. 
This concept was later used in \cite{tsogi} to condition the same process to continuously hit a patch on the unit sphere from outside. 

\begin{remark}
If we consider $x,y\in \mathbb{H}_{0\downarrow}$ such that $x^{(1)}<y^{(1)}<0$, replace $\tau_{0}^{\vee}$ by the stopping time $\tau_0^{\wedge}=\inf\{s>0:X_s^{(1)}>0\}$ and define $\overline{G}(\tau_0^{\wedge}):=\sup\{s<\tau_0^{\wedge}:X_s^{(1)}=\sup_{u<\tau_0^{\vee}}X_u^{(1)}\}$ (the point of closest reach from below of $\mathbb{H}_0$) by symmetry, the kernel in \eqref{PCR} is also the density of $X_{\overline{G}(\tau_0^{\wedge})}$.
\end{remark}

The use of an excursion framework to characterise directional minima, together with explicit expressions for functionals under the excursion measure, allow us to obtain joint laws of related quantities at first passage times in regards half-spaces. These so-called $n$\textit{-tuple} laws at first entry/exit of a half-space provide a non-trivial extension to the simple undershoot-overshoot law.

\begin{theorem}[Triple law at the first crossing of a hyperplane]\label{thm:triplelaw}
Fix $r\in\mathbb{R}$ and define for $w,x,y \in \mathbb{H}_{r\uparrow} $ and $z \in \mathbb{H}_{r\downarrow} $, the following density
\[
\Xi_x(w,y,z):=A_{\alpha,d}\frac{|x^{(1)}-w^{(1)}|^{\alpha/2}}{|x-w|^d}\frac{|w^{(1)}-y^{(1)}|^{\alpha/2}}{|w-y|^d}|y-z|^{-\alpha-d},
\] 
where 
\[
A_{\alpha,d}=\frac{2^{\alpha}\Gamma(d/2)^2\Gamma((d+\alpha)/2)}{\pi^{3d/2}\Gamma(\alpha/2)^2|\Gamma(-\alpha/2)|}.
\]
 Then, for $x^{(1)}>w^{(1)}>r,$ $y^{(1)}> w^{(1)}$ and $z^{(1)}<r$,
    \begin{equation}
    \mathbb{P}_x(X_{G(\tau_r^{\vee})}\in {\rm d}w,X_{\tau_r^{\vee}-}\in {\rm d}y,X_{\tau_r^{\vee}}\in {\rm d}z)=\Xi_{x}(w,y,z)\, {\rm d}w\, {\rm d}y\, {\rm d}z.
\label{triplelaw}
    \end{equation}
\end{theorem}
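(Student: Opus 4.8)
The plan is to build the triple law out of the one-sided point-of-closest-reach law of Theorem \ref{thm:pcr} together with an excursion-theoretic decomposition of the path at the directional minimum $G(\tau_r^\vee)$. By stationarity and isotropy it suffices to take $r=0$. First I would invoke Theorem \ref{thm:pcr} to write down the marginal law of $X_{G(\tau_0^\vee)}\in\dd w$ on $\mathbb{H}_{0\uparrow}$; this supplies the first factor $|x^{(1)}-w^{(1)}|^{\alpha/2}|x-w|^{-d}$ (up to constants). The task then reduces to identifying the conditional joint law of the pair $(X_{\tau_0^\vee-},X_{\tau_0^\vee})=(y,z)$ given $X_{G(\tau_0^\vee)}=w$, i.e. the undershoot–overshoot of the single excursion (from the running directional minimum of $X^{(1)}$) that straddles the crossing time $\tau_0^\vee$.

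The key steps are: (i) set up the excursion process from the directional minimum of $X^{(1)}$, indexed — as announced in the introduction — by the value of $X^{(2:d)}$ at the time the minimum is attained, so that the local-time/ladder structure of the one-dimensional symmetric stable process $X^{(1)}$ is enriched by the independent $d-1$ dimensional motion of $X^{(2:d)}$ evaluated along the ladder times; (ii) apply a compensation formula (Maisonneuve's exit formula, as in \cite{maisonneuve}; cf. the use of excursion theory in \cite{DEEP3, burdzy}) to the functional that records, for the first excursion whose $X^{(1)}$-component goes below $0$ relative to the current minimum $w^{(1)}$, the pre-jump position $y$ and the post-jump position $z$; (iii) evaluate the resulting integral against the excursion measure. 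The excursion measure for the straddling excursion, started from height $w^{(1)}$ in the first coordinate and from $w^{(2:d)}$ in the remaining coordinates, should factor into the entrance law of the excursion from the minimum (governed by the descending ladder structure of the real-valued stable process $X^{(1)}$, which is where the $|w^{(1)}-y^{(1)}|^{\alpha/2}$ and part of the $|w-y|^{-d}$ come from, together with the $(d-1)$-dimensional spread of $X^{(2:d)}$) times the Lévy-system jump kernel $\Pi(\dd(z-y))\propto |z-y|^{-\alpha-d}\dd z$ of \eqref{Pi}, which is precisely the last factor $|y-z|^{-\alpha-d}$ and the reason the absence of creeping (noted in the introduction) is essential — the crossing happens by a jump, so the overshoot is produced by a single application of the Lévy system. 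Assembling (i)–(iii) and carefully tracking the normalising constants — the $2^\alpha\Gamma((d+\alpha)/2)/(\pi^{d/2}|\Gamma(-\alpha/2)|)$ from $\Pi$, and two copies of the Cauchy-type constant $\Gamma(d/2)/(\pi^{d/2}\Gamma(\alpha/2))$ type factors entering through Theorem \ref{thm:pcr} and the entrance law — should reproduce $A_{\alpha,d}$ and hence $\Xi_x(w,y,z)$.

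A cleaner alternative I would keep in reserve: obtain \eqref{triplelaw} directly from the (already available) space-time potential/Green's-function description of the stable process killed on entering $\mathbb{H}_{0\downarrow}$. Writing $U_{\mathbb{H}_{0\uparrow}}(x,\dd w)$ for the occupation measure at the directional minimum and then applying the Lévy system $\int \Pi(z-y)\,\dd z$ to jump across the hyperplane would give the triple $(w,y,z)$ law as $U_{\mathbb{H}_{0\uparrow}}(x,\dd w)\otimes(\text{entrance from }w\text{ law in }\dd y)\otimes\Pi(\dd(z-y))$; matching this against the known half-space Green's function of \cite{byczkowski2009bessel, tamura2008formula} pins the constants without re-deriving the excursion entrance law from scratch. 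Either way, the two inner factors carrying exponent $\alpha/2$ signal that one is really seeing two independent copies of the "distance to the hyperplane raised to the ladder index" phenomenon — one for the pre-minimum path ($x\to w$) and one for the post-minimum, pre-crossing path ($w\to y$) — glued by the jump $y\to z$.

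The main obstacle I anticipate is step (ii)–(iii): making the excursion measure from the directional minimum fully explicit. Unlike the radial setting of \cite{DEEP3}, the excursions here are indexed by the $(d-1)$-dimensional location of the minimum, and one must show that, conditionally on the height process of $X^{(1)}$, the transverse component $X^{(2:d)}$ contributes an independent isotropic stable motion time-changed by the real ladder clock; disentangling this dependence, and in particular verifying that the excursion that straddles $\tau_0^\vee$ has the claimed entrance law in $\dd y$ (the "$|w-y|^{-d}|w^{(1)}-y^{(1)}|^{\alpha/2}$" factor) rather than merely a marginal for $y^{(1)}$, is where the real work lies. The bookkeeping of the constant $A_{\alpha,d}$ is then a routine but delicate check.
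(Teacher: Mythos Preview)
Your proposal is correct and follows essentially the paper's route: the L\'evy system plus Maisonneuve's exit formula give the factorisation $U_x^-(\dd w)\,\mathbb{N}_{w^{(2:d)}}(\text{occupation in }\dd y)\,\Pi(\dd(z-y))$, and the middle factor is identified with the ascending ladder potential $U_w^+(\dd y)$ in the paper's key Lemma~\ref{lem:silverstein}, which is proved precisely via your ``alternative'' (the half-space Green function of Lemma~\ref{cor:marginal} together with a limiting argument as $r\uparrow x^{(1)}$). One small slip: the $w\to y$ factor is the excursion \emph{occupation} density under $\mathbb{N}$ (hence $U^+$, the \emph{ascending} ladder potential by a Silverstein-type duality), not an entrance law governed by the descending ladder.
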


Once again, by symmetry, the previous result can be easily extended to the case where the point of issue is in $\mathbb{H}_{r\downarrow}$ and  upward crossing occurs at $\tau_r^{\wedge}$. 
\smallskip

Technically speaking, the next two Lemmas could be derived by marginalising \eqref{triplelaw} and, as such may be regarded as corollaries of Theorem \ref{thm:triplelaw}. However, it turns out they can be derived directly in a more straightforward way. We therefore state them as lemmas rather than corollaries. 

\begin{lemma}[Overshoot and undershoot at first crossing of a hyperplane] \label{cor:marginal}
Fix $r\in \mathbb{R}$ and define for $x,y \in \mathbb{H}_{r\uparrow}$ and $z \in \mathbb{H}_{r\downarrow}$ 
$$\Xi_x(\bullet,y,z):={B}_{\alpha,d} \bigg( |x-y|^{\alpha-d}\int_0^{\zeta_r^{\vee}(x,y)}(u+1)^{-d/2}u^{\alpha/2-1}{\rm d}u \bigg ) |z-y|^{-(\alpha+d)}$$where
\[
\zeta_r^{\vee}(x,y):=\frac{4(x^{(1)}-r)(y^{(1)}-r)}{|x-y|^2}
\quad\text{ and }\quad {B}_{\alpha,d}=
\frac{ \Gamma((d+\alpha)/2)\Gamma(d/2)}{\pi^{d}|\Gamma(\alpha/2)^2\Gamma(-\alpha/2)|}.
\] 
Then, for $y^{(1)}>r>z^{(1)}$
\begin{equation}
\mathbb{P}_x(X_{\tau_r^{\vee}-}\in {\rm d}y,X_{\tau_r^{\vee}}\in {\rm d}z)=\Xi_{x}(\bullet,y,z){\rm d}y{\rm d}z.
\label{doublelaw}
\end{equation}
\end{lemma}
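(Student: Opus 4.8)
The plan is to obtain Lemma~\ref{cor:marginal} directly, bypassing the excursion machinery behind Theorem~\ref{thm:triplelaw}, by pairing the compensation (Lévy system) formula for the jumps of $X$ with the classical expression for the Green function of a half-space of the isotropic stable process. By spatial homogeneity I would reduce to $r=0$ and relabel at the end. Since $\tau_0^\vee<\infty$ a.s., since $X$ leaves $\mathbb{H}_{0\uparrow}$ by a jump and not by creeping, and since $X^{(1)}$ spends zero Lebesgue time at points (all quoted in the Introduction), the pair $(X_{\tau_0^\vee-},X_{\tau_0^\vee})$ is carried by the unique atom of the jump measure of $X$ occurring while $X$ is still in the \emph{open} half-space $\mathbb{H}_{0\uparrow}$. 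Writing $N$ for the Poisson random measure of jumps of $X$ with compensator ${\rm d}t\,\Pi({\rm d}\xi)$, $\Pi$ as in \eqref{Pi}, and feeding the predictable integrand $\Indi{t\le\tau_0^\vee}\Indi{(X_{t-}+\xi)^{(1)}<0}\,g(X_{t-},X_{t-}+\xi)$ into the master formula, I would arrive, for bounded measurable $g$, at
\[
\mathbb{E}_x\bigl[g(X_{\tau_0^\vee-},X_{\tau_0^\vee})\bigr]
=\int_0^\infty \mathbb{E}_x\Bigl[\Indi{t\le\tau_0^\vee}\int_{\mathbb{R}^d}\Indi{(X_{t-}+\xi)^{(1)}<0}\,g(X_{t-},X_{t-}+\xi)\,\Pi({\rm d}\xi)\Bigr]{\rm d}t .
\]

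Next I would interchange the time integral with the expectation (all integrands are nonnegative) and introduce the $0$-potential measure of $X$ killed on leaving $\mathbb{H}_{0\uparrow}$,
\[
U_{\mathbb{H}_{0\uparrow}}(x,{\rm d}y):=\int_0^\infty \mathbb{P}_x\bigl(X_t\in{\rm d}y,\ t<\tau_0^\vee\bigr)\,{\rm d}t ,
\]
which is carried by the open half-space. This rearranges the identity above to
\[
\mathbb{P}_x\bigl(X_{\tau_0^\vee-}\in{\rm d}y,\ X_{\tau_0^\vee}\in{\rm d}z\bigr)
= \frac{2^{\alpha}\Gamma((d+\alpha)/2)}{\pi^{d/2}|\Gamma(-\alpha/2)|}\,|z-y|^{-\alpha-d}\,U_{\mathbb{H}_{0\uparrow}}(x,{\rm d}y)\,{\rm d}z ,\qquad y^{(1)}>0>z^{(1)},
\]
which already produces the factor $|z-y|^{-(\alpha+d)}$, with the overall constant still to be sorted out.

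It then remains to identify the killed potential with the Green function of the half-space for the isotropic $\alpha$-stable process,
\[
U_{\mathbb{H}_{0\uparrow}}(x,{\rm d}y)=G_{\mathbb{H}_{0\uparrow}}(x,y)\,{\rm d}y,\qquad
G_{\mathbb{H}_{0\uparrow}}(x,y)=\frac{\Gamma(d/2)}{2^{\alpha}\pi^{d/2}\Gamma(\alpha/2)^2}\,|x-y|^{\alpha-d}\int_0^{\zeta_0^\vee(x,y)}(1+u)^{-d/2}u^{\alpha/2-1}{\rm d}u,
\]
with $\zeta_0^\vee(x,y)=4x^{(1)}y^{(1)}/|x-y|^2$; I would quote this from \cite{byczkowski2009bessel,tamura2008formula}, or recover it from the Green function of a ball in \cite{DEEP3} by sending the radius to infinity. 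It is valid for every $\alpha\in(0,2)$ since $\alpha<2\le d$, so the process is transient and the relevant Riesz potential is finite and the $u$-integral converges. Substituting and collecting constants, the product of $\Gamma(d/2)\bigl(2^{\alpha}\pi^{d/2}\Gamma(\alpha/2)^2\bigr)^{-1}$ with the Lévy constant $2^{\alpha}\Gamma((d+\alpha)/2)\bigl(\pi^{d/2}|\Gamma(-\alpha/2)|\bigr)^{-1}$ collapses to exactly $B_{\alpha,d}$, and translating $\{x^{(1)}=0\}$ to $\{x^{(1)}=r\}$ turns $\zeta_0^\vee$ into $\zeta_r^\vee$; this is \eqref{doublelaw}.

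The main obstacle will be pinning down $G_{\mathbb{H}_{0\uparrow}}$ \emph{with the exact constant}; the remaining points—measurability, the interchange of integrals, predictability of the integrand in the master formula, and the fact that $\tau_0^\vee$ is achieved by a jump so that the left-hand side of the master formula is indeed $\mathbb{E}_x[g(X_{\tau_0^\vee-},X_{\tau_0^\vee})]$—are routine given the path facts collected in the Introduction. For a self-contained derivation of $G_{\mathbb{H}_{0\uparrow}}$ I would use the Kelvin transform: a half-space is the image of a ball under an inversion, and isotropy together with self-similarity sends the stable process to a time-change of itself under inversions, so the Blumenthal--Getoor--Ray Green function of the ball pushes forward to the displayed $G_{\mathbb{H}_{0\uparrow}}$; carrying the Jacobian factors through that change of variables is the one step requiring genuine care.
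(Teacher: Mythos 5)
Your argument is correct and takes a genuinely different route from the paper's. The paper proves Lemma~\ref{cor:marginal} via the ``flat earth'' approximation, Lemma~\ref{lem:flatearth}: start from the corresponding undershoot--overshoot law for first entry into a ball (Corollary~1.4 of \cite{DEEP3}), send the radius to infinity so that, after recentring, the ball interior becomes the half-space, and pass to the limit; the actual limit is left to the reader. You instead feed a predictable integrand into the compensation formula for the jump Poisson random measure to factorise the double law as the $0$-potential of $X$ killed on exiting $\mathbb{H}_{r\uparrow}$ times the jump measure, and then quote the explicit half-space Green function. The two routes are essentially dual to each other: the authors use precisely your L\'evy-system identity $\Xi_x(\bullet,y,z)\,{\rm d}y\,{\rm d}z = h_r^\vee(x,y)\,{\rm d}y\,\Pi({\rm d}z-y)$ inside the proof of Theorem~\ref{thm:triplelaw}, but in the opposite direction, to read off the Green function $h_r^\vee$ \emph{from} the already-established Lemma~\ref{cor:marginal}; and the remark after Lemma~\ref{bgrfe} explicitly acknowledges that the Green-function/potential-analytic derivation is Byczkowski's route. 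Your constant check is right: $\Gamma(d/2)/\bigl(2^\alpha\pi^{d/2}\Gamma(\alpha/2)^2\bigr)$ times the L\'evy normalisation $2^\alpha\Gamma((d+\alpha)/2)/\bigl(\pi^{d/2}|\Gamma(-\alpha/2)|\bigr)$ does collapse to $B_{\alpha,d}$. The trade-off is that your proof outsources $G_{\mathbb{H}_{0\uparrow}}$ to an external reference (or, if you recover it from the ball Green function by taking $R\to\infty$, you are in effect re-deriving the flat earth lemma in a special case), whereas the paper's route keeps all half-space identities in one deductive chain flowing from the ball identities via Lemma~\ref{lem:flatearth}.
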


Finally, in a second lemma below we give the law of just the overshoot distribution. However, we note that its proof again avoids performing a marginalisation of, this time,  the density \eqref{doublelaw}, and we use a third approach for its proof. 

\begin{lemma}[Overshoot at first crossing of a hyperplane]\label{bgrfe}
Fix $r\in \mathbb{R}$ and define for $x \in \mathbb{H}_{r\uparrow}$ and $z \in \mathbb{H}_{r\downarrow}$ 
$$\Xi_x(\bullet, \bullet,z):=C_{\alpha,d}|x-z|^{-d}\frac{|r-x^{(1)}|^{\alpha/2}}{|r-z^{(1)}|^{\alpha/2}},$$ where $C_{\alpha,d}=\pi^{-(d/2+1)}\Gamma(d/2)\sin(\alpha\pi/2)$. Then
\begin{equation} 
  \quad \mathbb{P}_x(X_{\tau^{\vee}_r}\in {\rm d}z)=\Xi_x(\bullet, \bullet,z)\dd z\quad z^{(1)}<r<x^{(1)}.
\end{equation}
\end{lemma}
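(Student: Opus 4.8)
The plan is to obtain the overshoot law of $X^{(1)}$ at first crossing of the level $r$ directly, bypassing both the triple law and the double law, by exploiting the fact that $X^{(1)}$ is itself a one-dimensional symmetric $\alpha$-stable process together with the conditional law of $X^{(2:d)}$ given the trajectory of $X^{(1)}$. First I would note that by spatial homogeneity in the $x^{(1)}$-direction and isotropy in the remaining coordinates it suffices to treat $r=0$ and $x=(x^{(1)},0,\dots,0)$ with $x^{(1)}>0$; the general case then follows by translating in the first coordinate and applying an orthogonal transformation fixing $\mathbb{H}_0$. The key structural input is that, conditionally on the path $(X^{(1)}_s, s\le \tau_0^{\vee})$ and on $\tau_0^{\vee}=t$, the vector $X^{(2:d)}_{\tau_0^{\vee}}$ is distributed as $X^{(2:d)}_{t}$, i.e. as an isotropic $(d-1)$-dimensional stable process run for the (random) time $t$, started from the $(d-1)$-dimensional origin (when $x$ is taken on the axis). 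Because $X^{(2:d)}$ is a subordinated Brownian motion / self-similar of index $\alpha$, its time-$t$ marginal has a known density; after integrating out $t$ against the law of the pair $(\tau_0^{\vee}, X^{(1)}_{\tau_0^{\vee}})$ this should collapse — via the classical one-dimensional overshoot density for a symmetric stable process and a beta-type integral in the time variable — to the $(d-1)$-dimensional Cauchy-type kernel appearing on the right-hand side.

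The key steps, in order, would be: (i) reduce to $r=0$, $x$ on the axis; (ii) recall the one-dimensional overshoot law $\mathbb{P}_{x^{(1)}}(X^{(1)}_{\tau_0^{\vee}}\in \mathrm{d}v)$ for the symmetric $\alpha$-stable process $X^{(1)}$ (the Blumenthal–Getoor–Ray formula), which gives a density proportional to $(x^{(1)})^{\alpha/2}|v|^{-\alpha/2}(x^{(1)}-v)^{-1}$ for $v<0<x^{(1)}$; (iii) write the conditional density of $X^{(2:d)}_{\tau_0^{\vee}}$ given $\tau_0^{\vee}$ using the $(d-1)$-dimensional stable transition density at time $\tau_0^{\vee}$; (iv) use the known joint law, or the self-similar/Lamperti description, of $(\tau_0^{\vee}, X^{(1)}_{\tau_0^{\vee}})$ — alternatively, use the subordination representation $X^{(2:d)}_t \eqd B_{\sigma_t}$ with $\sigma$ an $\alpha/2$-stable subordinator independent of the Brownian motion $B$, so that conditionally on $\sigma_{\tau_0^{\vee}}$ the vector $X^{(2:d)}_{\tau_0^{\vee}}$ is Gaussian; (v) integrate over the Gaussian/time variables to produce the factor $|x-z|^{-d}$ and verify the constant equals $C_{\alpha,d}=\pi^{-(d/2+1)}\Gamma(d/2)\sin(\alpha\pi/2)$. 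Throughout I would use that crossing occurs without creeping (stated in the introduction) so that the overshoot is strictly negative a.s.\ and no atom at $r$ appears.

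The main obstacle I anticipate is step (iv)–(v): cleanly handling the time variable $\tau_0^{\vee}$, which is not independent of $X^{(1)}_{\tau_0^{\vee}}$, and carrying out the resulting multi-dimensional integral so that the Gaussian spreading in the $(d-1)$ transverse directions combines with the one-dimensional overshoot kernel to yield exactly the isotropic $|x-z|^{-d}$ decay with the stated normalising constant. An alternative, and perhaps cleaner, route that avoids the joint law of $(\tau_0^{\vee}, X^{(1)}_{\tau_0^{\vee}})$ altogether is to start from Theorem \ref{thm:pcr}: the law of the point of closest reach $X_{G(\tau_0^{\vee})}$ is already in hand, and conditionally on $X_{G(\tau_0^{\vee})}=w$ the post-$G(\tau_0^{\vee})$ excursion straddling $\tau_0^{\vee}$ can be described through the excursion measure from the directional minimum developed for Theorem \ref{thm:triplelaw}; integrating the excursion-measure overshoot kernel $|y-z|^{-\alpha-d}$ first in $y$ and then in $w$ against \eqref{PCR} should also yield the claim, with the beta integral $\int_0^\infty (u+1)^{-d/2} u^{\alpha/2-1}\,\mathrm{d}u = B(\alpha/2, (d-\alpha)/2)$ supplying the constant. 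Either way the verification of the constant — reconciling the various $\Gamma$-factors and the reflection-formula identity $\Gamma(\alpha/2)\Gamma(1-\alpha/2)=\pi/\sin(\alpha\pi/2)$ — is the bookkeeping-heavy part, while the conceptual content is the conditional-Cauchy structure of the transverse coordinates.
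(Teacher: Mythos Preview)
Your proposal rests on a structural claim that is not correct. You assert that, conditionally on the path $(X^{(1)}_s,\,s\le\tau_0^\vee)$ and on $\tau_0^\vee=t$, the vector $X^{(2:d)}_{\tau_0^\vee}$ has the unconditional law of $X^{(2:d)}_t$. This would require $X^{(1)}$ and $X^{(2:d)}$ to be independent, and for an isotropic $\alpha$-stable process with $\alpha<2$ they are not: the L\'evy measure \eqref{Pi} couples all coordinates through a single jump of size $|y|$, and in the subordination picture $X_t=W_{\sigma_t}$ the two blocks $W^{(1)}$ and $W^{(2:d)}$ share the \emph{same} $\alpha/2$-stable subordinator $\sigma$. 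Conditioning on the path of $X^{(1)}$ therefore gives nontrivial information about $\sigma$, and hence about $X^{(2:d)}$; knowing only $\tau_0^\vee=t$ does not recover the marginal of $X^{(2:d)}_t$. The conditional Cauchy law you are reaching for \emph{is} true, but in the paper it is derived \emph{from} Lemma~\ref{bgrfe} (see \eqref{ReferenceToEqnInTheproofOfThm3} in the proof of Theorem~\ref{conditionalslab}) by dividing the $d$-dimensional overshoot density by the one-dimensional one. Using it as an input here would be circular.

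Your alternative route through Theorem~\ref{thm:pcr} is likewise circular in the paper's logical order: the proof of Theorem~\ref{thm:pcr} explicitly conditions on first entry into $\mathbb{H}_{r\downarrow}$ using Lemma~\ref{bgrfe} (see \eqref{innerintegral}). The paper instead proves Lemma~\ref{bgrfe} by a ``flat earth'' limit (Lemma~\ref{lem:flatearth}): one approximates $\mathbb{H}_{r\downarrow}$ by balls $\mathbb{B}_{(-R\mathbf{e}_1,R+r)}$ with $R\to\infty$, applies the explicit Blumenthal--Getoor--Ray density for first entry into a ball, and passes to the limit under the integral. This avoids any conditional decomposition of the coordinates and requires only the known ball-entry formula together with dominated convergence.
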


\begin{remark}\rm
Lemma \ref{bgrfe} is known already from \cite{byczkowski2009bessel} and Lemma  \ref{cor:marginal} is also implicitly known from the same reference, albeit stated up to multiplication by the stable jump measure as the Green function for $X$ killed on exiting $\mathbb{H}_{r\uparrow}$. The approach taken by  \cite{byczkowski2009bessel} uses potential analysis rather than the fluctuation theory approach that we use here. 
\end{remark}

\section{Conditional law and numerics for first entry into a slab}
The objective of this section is to give a partial distributional decomposition of  the density of the first hit into the infinite slab $S=(-1,1)\times \mathbb{R}^{d-1}$ of $(X,\mathbb{P})$, together with a Monte Carlo method for numerically determining it. Pre-emptively assuming it exists, the density we are interested in is the bivariate function $H(x,y)$ such that for $x\in \mathbb{R}^d\setminus S$,
$$\mathbb{P}_x(X_{\tau_S}\in \dd y,\tau_S<\infty)
=H(x,y)\dd y, \quad y\in (-1,1)\times \mathbb{R}^{d-1}.$$

\smallskip

It is known that in the case $\alpha\in (1,2)$ the process $X$ will hit the slab with probability one because $X^{(1)}$, its first coordinate, will hit any interval   with probability one; cf. \cite{watson}. However, in the case $\alpha\in (0,1)$, we must necessarily work on the  event $\{\tau_S<\infty\}$.

\smallskip

Because stable processes always make first passage into a half space by a jump (i.e. no creeping)  we know that the coordinate $X^{(1)}$ must make a random number of crossings of $(-1,1)$ before entering it. To this end, without loss of generality, we can start with $x^{(1)}>1$ and define $\sigma_0=0$. Then for $k = 1,2,\cdots$,  iteratively define, 
\begin{align*}
    \sigma_k&=\begin{cases}
        \inf\{t>\sigma_{k-1}:X^{(1)}_t>-1\} & \text{if $k$ is even }\\
                \inf\{t>\sigma_{k-1}:X^{(1)}_t<1\} & \text{if $k$ is odd },\\
    \end{cases}
\end{align*}
and the random index $N$ is such that $\{N=k\}$ agrees with $\tau_S=\sigma_k$.
\smallskip

Our first result concerns the conditional distribution of  $X^{(2:d)}_{\tau_S}$ given $(X^{(1)}_t, t\leq \tau_S)$. More precisely, we are interested in the conditional distribution of $X^{(2:d)}_{\tau_S}$ given $(X^{(1)}_{\sigma_k}, k = 0,1,\cdots, N)$.
To this end, let us define $\mathtt{Cauchy}_{d-1}(0, \gamma)$, a $(d-1)$-dimensional Cauchy distribution centred at 0 with scaling $\gamma>0$. That is to say, a random variable on $\mathbb{R}^{d-1}$ with density
\[
\frac{\Gamma(d/2)}{\pi^{d/2}}\frac{\gamma }{(\gamma^2+ x^2)^{d/2}}, \qquad x\in \mathbb{R}^{d-1}.
\]

\begin{theorem}[Conditional law of first entry into a slab]\label{conditionalslab} We have
\[
\emph{\text{Law}}\left(X_{\tau_S}^{(2:d)}\bigg| (X^{(1)}_{t}, t\leq \tau_S)\right)\sim x^{(2:d)}+\mathtt{Cauchy}_{d-1}\left(0,\, \sum_{k=1}^{N}|X^{(1)}_{\sigma_k}-X^{(1)}_{\sigma_{k-1}}|\right).
\]
\end{theorem}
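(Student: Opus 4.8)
The plan is to exploit the orthogonal decomposition $X=(X^{(1)},X^{(2:d)})$ together with the key structural fact that $X^{(2:d)}$ is, conditionally on the path of $X^{(1)}$, governed by a time change that is explicitly computable. Recall that $X^{(1)}$ is a one-dimensional symmetric $\alpha$-stable process and $X^{(2:d)}$ is a $(d-1)$-dimensional isotropic $\alpha$-stable process; crucially, by the subordination representation of isotropic stable processes, the vector process $(X^{(1)}, X^{(2:d)})$ can be realised by running independent Brownian motions $B^{(1)}$ and $B^{(2:d)}$ (in dimensions $1$ and $d-1$ respectively) under a common $\alpha/2$-stable subordinator $\mathcal{T}$, i.e. $X^{(1)}_t = B^{(1)}_{\mathcal{T}_t}$ and $X^{(2:d)}_t = B^{(2:d)}_{\mathcal{T}_t}$. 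This is the source of the conditional dependence advertised in the introduction, and the reason a Cauchy (rather than Gaussian or stable) distribution appears.

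First I would make precise the conditioning. The event $\{\tau_S = \sigma_N\}$ and the values $X^{(1)}_{\sigma_k}$ are all measurable with respect to $(X^{(1)}_t, t\le\tau_S)$, so it suffices to identify the law of $X^{(2:d)}_{\tau_S}$ given the whole first-coordinate path up to $\tau_S$. Working in the subordinated picture, conditioning on $(X^{(1)}_t,t\le\tau_S)$ amounts, roughly, to conditioning on the pair $(B^{(1)}, \mathcal{T})$ up to the relevant time horizon — in particular on the total amount of subordinator-time elapsed, $\mathcal{T}_{\tau_S}$. Given that horizon, $X^{(2:d)}_{\tau_S} - x^{(2:d)} = B^{(2:d)}_{\mathcal{T}_{\tau_S}}$ is a centred Gaussian in $\mathbb{R}^{d-1}$ with covariance $\mathcal{T}_{\tau_S}\,\mathrm{Id}$. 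The remaining task is to show that $\mathcal{T}_{\tau_S}$, given $(X^{(1)}_t, t\le\tau_S)$, is itself random with a law that, when mixed against the Gaussian, produces exactly $\mathtt{Cauchy}_{d-1}\!\big(0,\sum_{k=1}^N |X^{(1)}_{\sigma_k}-X^{(1)}_{\sigma_{k-1}}|\big)$. The right object here is the first-passage structure of $X^{(1)}$: between consecutive crossing times $\sigma_{k-1}$ and $\sigma_k$, the first coordinate travels from level $X^{(1)}_{\sigma_{k-1}}$ (on one side of the slab) across to the other side, and the conditional law of the elapsed subordinator-time over such a first-passage leg, given the endpoints of the $B^{(1)}$-excursion, is an inverse-Gaussian-type law whose Gaussian mixture is a one-dimensional Cauchy with scale equal to the spatial displacement $|X^{(1)}_{\sigma_k}-X^{(1)}_{\sigma_{k-1}}|$. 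Summing independent such legs (independence holding conditionally, by the strong Markov property at the $\sigma_k$) adds the scales, by stability of the Cauchy family under convolution, giving the stated total scale $\sum_{k=1}^N|X^{(1)}_{\sigma_k}-X^{(1)}_{\sigma_{k-1}}|$.

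Concretely, the steps are: (i) set up the common-subordinator representation and record that $X^{(2:d)}_{\tau_S}-x^{(2:d)}$ is, given $\mathcal{T}_{\tau_S}$ and the first-coordinate path, $\mathcal{N}(0,\mathcal{T}_{\tau_S}\mathrm{Id}_{d-1})$; (ii) decompose $\mathcal{T}_{\tau_S} = \sum_{k=1}^N (\mathcal{T}_{\sigma_k}-\mathcal{T}_{\sigma_{k-1}})$ and use the strong Markov property to see these increments are conditionally independent given the crossing data; (iii) for a single leg, identify the conditional law of the subordinator-time increment given the $B^{(1)}$ path segment running from one slab boundary to the other — this is where one uses that a stable process hits a level by a jump and that the relevant bridge/first-passage decomposition of $B^{(1)}$ under $\mathcal{T}$ yields, after integrating out the Gaussian, a univariate $\mathtt{Cauchy}(0,|X^{(1)}_{\sigma_k}-X^{(1)}_{\sigma_{k-1}}|)$ for each coordinate of $X^{(2:d)}$; (iv) convolve over $k$ using additivity of Cauchy scale parameters, and restore the $(d-1)$-dimensional isotropic form. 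I expect step (iii) to be the main obstacle: one must correctly characterise what information about $\mathcal{T}$ is retained after conditioning on $(X^{(1)}_t,t\le\tau_S)$ — the first-coordinate path does \emph{not} determine $\mathcal{T}$, only $X^{(1)}$ as a time-changed Brownian motion — and then compute the conditional Laplace/Fourier transform of the $B^{(2:d)}$-increment over each leg. The cleanest route is probably to condition first on the full pair $(B^{(1)},\mathcal{T})$, compute the (Gaussian) characteristic function of $X^{(2:d)}_{\tau_S}$, and then show that averaging over the conditional law of $(B^{(1)},\mathcal{T})$ given $(X^{(1)}_t,t\le\tau_S)$ collapses the exponent to $-\sum_{k=1}^N|X^{(1)}_{\sigma_k}-X^{(1)}_{\sigma_{k-1}}|\,|\theta|$, which is the characteristic function of $x^{(2:d)}+\mathtt{Cauchy}_{d-1}(0,\sum_{k=1}^N|X^{(1)}_{\sigma_k}-X^{(1)}_{\sigma_{k-1}}|)$; the identity $\mathbb{E}[\exp(-\tfrac12 s^2 T)] = \exp(-|s|\,\ell)$ for $T$ the first passage time of $B^{(1)}$ over a distance $\ell$ is the analytic heart of the argument.
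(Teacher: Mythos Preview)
Your approach via the common-subordinator representation is quite different from the paper's, and the paper's is far more direct. The paper simply takes the explicit $d$-dimensional overshoot density from Lemma~\ref{bgrfe},
\[
\mathbb{P}_x(X_{\tau^{\vee}_r}\in {\rm d}y)=C_{\alpha,d}\,|x-y|^{-d}\frac{|r-x^{(1)}|^{\alpha/2}}{|r-y^{(1)}|^{\alpha/2}}\,{\rm d}y,
\]
divides it by its one-dimensional marginal (the same formula with $d=1$), and reads off that the conditional density of $X^{(2:d)}_{\tau_r^\vee}$ given $X^{(1)}_{\tau_r^\vee}=y^{(1)}$ is exactly the $\mathtt{Cauchy}_{d-1}(0,|x^{(1)}-y^{(1)}|)$ density centred at $x^{(2:d)}$. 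The iteration over legs via the strong Markov property at the $\sigma_k$ and the convolution of Cauchy scales is then as you describe in step~(iv). No subordination enters at all.

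Your route has a genuine gap at step~(iii). You invoke the identity $\mathbb{E}[\exp(-\tfrac12 s^2 T)]=\exp(-|s|\ell)$ for $T$ the Brownian first-passage time over distance $\ell$, but $\mathcal{T}_{\sigma_k}-\mathcal{T}_{\sigma_{k-1}}$ is \emph{not} a Brownian first-passage time: it is the amount of subordinator time accumulated while the \emph{subordinated} process $X^{(1)}=B^{(1)}\circ\mathcal{T}$ makes a first passage, and this depends jointly on $B^{(1)}$ and $\mathcal{T}$ in a nontrivial way (indeed $X^{(1)}$ crosses the level by a jump, so $B^{(1)}$ at Brownian-time $\mathcal{T}_{\sigma_k}$ has typically overshot). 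You correctly flag that the first-coordinate path does not determine $\mathcal{T}$, but you never supply the missing ingredient: the conditional law of $\mathcal{T}_{\sigma_k}-\mathcal{T}_{\sigma_{k-1}}$ given the crossing endpoints (or the whole first-coordinate path). A posteriori, from the paper's result, this conditional law \emph{does} turn out to have the inverse-Gaussian Laplace transform $\lambda\mapsto\exp(-\sqrt{2\lambda}\,|X^{(1)}_{\sigma_k}-X^{(1)}_{\sigma_{k-1}}|)$, but establishing that directly from the subordination picture is not obvious and is at least as hard as the paper's explicit density computation. In short, the subordination heuristic nicely explains \emph{why} a Cauchy law appears, but as written it does not prove it; the paper's one-line ratio of overshoot densities does.
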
 
\begin{proof}
  By Lemma \ref{bgrfe} in the case $d\geq2$,
    $$\mathbb{P}_x(X_{\tau^{\vee}_r}\in {\rm d}y)=C_{\alpha,d}|x-y|^{-d}\frac{|r-x^{(1)}|^{\alpha/2}}{|r-y^{(1)}|^{\alpha/2}}{\rm d}y \quad \text{ for } y^{(1)}<r<x^{(1)}$$
 where $C_{\alpha,d}=\pi^{-(d/2+1)}\Gamma(d/2)\sin(\alpha\pi/2)$.
Recalling that the same formula is true for the case $d=1$; cf. Corollary 3.5 in Chapter 3 of \cite{KJCbook} we have,
\[
\mathbb{P}_x(X^{(1)}_{\tau^{\vee}_r}\in {\rm d}y^{(1)})=C_{\alpha,1}|x^{(1)}-y^{(1)}|^{-1}\frac{|r-x^{(1)}|^{\alpha/2}}{|r-y^{(1)}|^{\alpha/2}}{\rm d}y^{(1)} \quad \text{ for } y^{(1)}<r<x^{(1)}.
\]

 Thus for $y^{(2:d)}\in \mathbb{R}^{d-1}$,
\begin{equation}
\mathbb{P}_x(X_{\tau_r^{\vee}}^{(2:d)}\in \dd y^{(2:d)}|X_{\tau_r^{\vee}}^{(1)} =y^{(1)})=\frac{\Gamma(d/2)}{\pi^{d/2}}\frac{|x^{(1)}-y^{(1)}|}{\left( (x^{(1)}-y^{(1)})^2+ |x^{(2:d)}-y^{(2:d)}|^2 \right)^{d/2}}\, \dd y^{(2:d)}.
\label{ReferenceToEqnInTheproofOfThm3}
\end{equation}
 That is to say, under $\mathbb{P}_x$,
\begin{align*}
\text{Law}\left(X^{(2:d)}_{\tau_r^{\vee}}-x^{(2:d)}\bigg| X^{(1)}_{\tau_r^{\vee}}=y^{(1)}\right)&\sim 
\mathtt{Cauchy}_{d-1}\left( 0,\,|x^{(1)}-y^{(1)}|\right).\notag\\
\end{align*}
By symmetry we can show that for  $x^{(1)}<r<y^{(1)}$, Law$(X^{(2)}_{\tau_r^{\wedge}}|X^{(1)}_{\tau_r^{\wedge}}=y^{(1)})$ under $\mathbb{P}_x$, follows a centered Cauchy distribution with scale parameter $|y^{(1)}-x^{(1)}|$.  
 
\smallskip

Since we know that 
\[
\mathtt{Cauchy}_{d-1}(0,\gamma) = \gamma \times \mathtt{Cauchy}_{d-1}(0,1),
\] 
we can thus identify, for $k = 1, \cdots, N$, with $\sigma_0=0$, and hence $x^{(1)} = X_{\sigma_0}$,
\[
\text{Law}\left(X^{(2:d)}_{\sigma_k}-x^{(2:d)}| X^{(1)}_{\sigma_j},  \, j = 0,1,\cdots, k\right)\sim 
\sum_{i = 1}^k |X^{(1)}_{\sigma_i}-X^{(1)}_{\sigma_{i-1}}|\times\mathtt{Cauchy}^{(i)}_{d-1}(0,\,1),
\]
where $\mathtt{Cauchy}^{(i)}_{d-1}(0,\,1)$ are iid copies of $\mathtt{Cauchy}_{d-1}(0,\,1)$ and the result follows by infinite divisibility and scaling of the Cauchy distribution.
\end{proof}

The proof of Theorem \ref{conditionalslab} inspires the following numerical approach to simulating the law of $X_{\tau_S}$ using an approach which is reminiscent of the so-called `walk-on-spheres' Monte Carlo approach described for stable processes in \cite{kyprianou2018unbiased}. As such we refer to it as the `walk-on-half-spaces' Monte Carlo approach.
\bigskip

\hrule
\bigskip

\noindent{\bf Algorithm 1} (simulation of $X_{\tau_S}$ under $\mathbb{P}_x$ for $x\in\mathbb{R}^d$ with $x^{(1)}>1$, $\alpha\in[1,2)$){\bf .}
\bigskip

\hrule
\bigskip

\noindent\textbf{sample}  $x_1$ from the distribution of $X_{\tau_1^{\vee}}$ under $\mathbb{P}_x$;
\bigskip

\noindent{\bf set}  $k=1$;
\bigskip

\noindent\textbf{while}  $x_k\notin S$

\hspace{1cm}
\begin{minipage}{\dimexpr\textwidth-2cm}

\medskip

  {\bf if} $x_k^{(1)}<-1$ {\bf then}  
  
  \medskip

  \hspace{1cm}
  \begin{minipage}{\dimexpr\textwidth-4cm}

\medskip
  {\bf sample} $x_{k+1}$ from the distribution of $X_{\tau_{-1}^{\wedge}}$ under  $\mathbb{P}_{x_k}$;
  
  \medskip
  {\bf set:} $k=k+1$;
  
  \end{minipage}
    \medskip

{\bf end if}

\bigskip

  {\bf if} $x_{k}^{(1)}>1$ {\bf then}
  \medskip

  \hspace{1cm}
  \begin{minipage}{\dimexpr\textwidth-4cm}
  
    {\bf sample} $x_{k+1}$ from the distribution of $X_{\tau_{1}^{\vee}}$ under $\mathbb{P}_{x_k}$;
    \medskip
    
   {\bf set} $k=k+1$;
   \medskip
   
 \end{minipage}
     \medskip
     
{\bf end if}

\end{minipage}
\bigskip

\noindent{\bf end while}
\bigskip

\noindent{\bf return} $N= k$;
\bigskip

\noindent{\bf return} $X_{\tau_S} = x_k$;
\bigskip

\hrule

\bigskip

\noindent More generally, the Algorithm above stores the information $(x_1, \cdots, x_N)$, which corresponds precisely to a simulation of $(X_{\sigma_1}, \cdots, X_{\sigma_N})$.
\smallskip

One of the problems with Algorithm 1 is that, in the setting $\alpha\in(0,1)$, the while loop may never end on account of the fact that $\mathbb{P}_x(\tau_S=\infty)>0$, by Corollary 1.2 of \cite{watson}.
Nonetheless, it is possible to `force' a stable process to hit $S$ in the case $\alpha \in (0,1)$ improving the efficiency of the walk-on-half-spaces algorithm without compromising the exactness of the algorithm.

We recall from Chapter 12.2 of  \cite{KJCbook} that, when $\alpha\in(0,1)$, the change of measure
\[
\frac{\dd \mathbb{P}^\circ_x}{\dd \mathbb{P}_x}\Bigg|_{(X_s,s\leq t)} = \left|\frac{X^{(1)}_t}{x^{(1)}}\right|^{\alpha -1}
\]
corresponds to conditioning the process $X^{(1)}$ to continuously absorb at the origin in an almost surely finite time. 
It follows that 
\[
\mathbb{P}_x(X_{\tau_S}\in \dd y,\tau_S<\infty)=\frac{|x^{(1)}|^{\alpha-1}}{|y^{(1)}|^{\alpha-1}}\mathbb{P}^{\circ}_x(X_{\tau_S}\in \dd y), \qquad x^{(1)}\not\in(-1,1), y^{(1)}\in(-1,1).
\]
This suggests we can numerically reconstruct the law of $X_{\tau_S}$ by undertaking Algorithm 1 albeit with sampling taken under $\mathbb{P}^\circ$ rather than $\mathbb{P}$.
Fundamentally, this means sampling from the two distributions 
\begin{equation}\label{eq:ch5:pcirc}
    \mathbb{P}_x^{\circ}(X_{\tau_1^{\vee}}\in \dd y)=C_{\alpha,d}\frac{|y^{(1)}|^{\alpha-1}}{(x^{(1)})^{\alpha-1}}|x-y|^{-d}\frac{(x^{(1)}-1)^{\alpha/2}}{(1-y^{(1)})^{\alpha/2}}{\rm d}y, \qquad y^{(1)}<1<x^{(1)},
\end{equation}
and
\begin{equation*}
    \mathbb{P}_x^{\circ}(X_{\tau_{-1}^{\wedge}}\in \dd y)=C_{\alpha,d}\frac{|y^{(1)}|^{\alpha-1}}{(-x^{(1)})^{\alpha-1}}|x-y|^{-d}\frac{(-1-x^{(1)})^{\alpha/2}}{(y^{(1)}+1)^{\alpha/2}}{\rm d}y, \qquad x^{(1)}<-1<y^{(1)}.
\end{equation*}

\section{A numerical example}
We consider the implementation of the walk-on-half-spaces algorithm for a stable process on $\mathbb{R}^2$. 
To sample from the distribution of $X_{\tau_1^{\vee}} = (X_{\tau_1^{\vee}}^{(1)}, X_{\tau_1^{\vee}}^{(2)})$ under $\mathbb{P}_x$, $x^{(1)} > 1$, we split the procedure into two steps. 

\smallskip

First, we simulate an instance of $X_{\tau_1^{\vee}}^{(1)}$. This can be done by treating this as a first passage problem of the one-dimensional symmetric stable process $X^{(1)}$ and noting from Lemma \ref{bgrfe} that 
$$
\mathbb{E}_{x^{(1)}}\left[ f\left(\frac{1-X_{\tau_1^{\vee}}^{(1)}}{x^{(1)}-X_{\tau_1^{\vee}}^{(1)}}\right)\right] = \frac{\sin(\alpha\pi/2)}{\pi}\int_0^1   f(u) u^{-\alpha/2} (1-u)^{\alpha/2-1} \, \mathrm{d}u,
$$
which said in another way means $(1 - X_{\tau_1^{\vee}}^{(1)})(X_{\tau_1^{\vee}}^{(1)} - x^{(1)})^{-1}$, when $X^{(1)}_0 = x^{(1)}$, follows a Beta$(1-\alpha/2, \alpha/2)$ distribution. Observe that this transform is a bijection, hence we can simulate from the Beta kernel and map it back to obtain $X_{\tau_1^{\vee}}^{(1)}$.

\smallskip

Second, we use equation \eqref{ReferenceToEqnInTheproofOfThm3}, which under the event $\{X_{\tau_1^{\vee}}^{(1)} = y_1^{(1)}\}$, allows us to sample $X_{\tau_1^{\vee}}^{(2)}-x^{(2:d)}$ by sampling from a $\mathtt{Cauchy}_1( 0, |X_{\tau_1^{\vee}}^{(1)} - x^{(1)}|)$ distribution. By symmetry, the same can be done for sampling from $X_{\tau_{-1}^{\wedge}}$ under $\mathbb{P}_{x^{(1)}}$, $x^{(1)} < -1$. Except for these details on how to simulate $X_{\tau_1^{\vee}}$ and $X_{\tau_{-1}^{\wedge}}$, the rest of the algorithm implementation is straightforward, see Figure \ref{fig:algorithm}.

\smallskip 

In the numerical examples, cf. Figure \ref{fig:joint}, we consider, without loss of generality, that $x^{(2)} = 0$. To illustrate the effect of choosing different parameters on the empirical distribution of $X_{\tau_S}$ we vary the point of issue $x^{(1)}$ and the scale parameter $\alpha$. In Figure \ref{fig:joint} (left) we observe the empirical distribution of 5 million first-hitting points in the strip, for 4 combinations of parameters $x^{(1)}$ and $\alpha$. We observe that an initial point closer to the boundary of the strip leads to a sample more concentrated in the horizontal axis neighbouring the starting point. Similarly, a scale index closer to $\alpha = 2$ leads to a first-hitting point more concentrated around the starting position, but also increases the probability of hitting the strip near its boundaries. These effects can also be seen in the marginal histograms of these samples, Figure \ref{fig:joint} (centre). It is worth noting that the marginal distribution of the first coordinate is known explicitly (cf. Theorem 1.1 in \cite{watson}). The code for these simulations can be found in the following github repository \url{https://github.com/sdaphnis/WoHS/}.

\begin{figure}[h!]
    \centering

\tikzset{every picture/.style={line width=0.75pt}} 

\begin{tikzpicture}[x=0.75pt,y=0.75pt,yscale=-0.8,xscale=0.8]

\draw  [fill={rgb, 255:red, 254; green, 255; blue, 245 }  ,fill opacity=1 ][dash pattern={on 5.63pt off 4.5pt}][line width=1.5]  (114,108.44) -- (568,108.44) -- (568,297.44) -- (114,297.44) -- cycle ;
\draw [line width=2.25]    (114,109) -- (568,108.44) ;
\draw [line width=2.25]    (114,297.44) -- (568,297.44) ;
\draw [color={rgb, 255:red, 128; green, 128; blue, 128 }  ,draw opacity=1 ] (596,81.99) -- (646,81.99)(601,42) -- (601,86.44) (639,76.99) -- (646,81.99) -- (639,86.99) (596,49) -- (601,42) -- (606,49)  ;

\draw [color={rgb, 255:red, 208; green, 2; blue, 27 }  ,draw opacity=1 ]   (206,77) -- (343.5,77) ;
\draw [color={rgb, 255:red, 208; green, 2; blue, 27 }  ,draw opacity=1 ]   (337,343.5) -- (483,343.5) ;
\draw [color={rgb, 255:red, 208; green, 2; blue, 27 }  ,draw opacity=1 ]   (449,72.5) -- (483,72.5) ;

\draw  [dash pattern={on 0.84pt off 2.51pt}]  (206,77) -- (337,343.5) ;
\draw [shift={(337,343.5)}, rotate = 63.82] [color={rgb, 255:red, 0; green, 0; blue, 0 }  ][fill={rgb, 255:red, 0; green, 0; blue, 0 }  ][line width=0.75]      (0, 0) circle [x radius= 3.35, y radius= 3.35]   ;
\draw [shift={(206,77)}, rotate = 63.82] [color={rgb, 255:red, 0; green, 0; blue, 0 }  ][fill={rgb, 255:red, 0; green, 0; blue, 0 }  ][line width=0.75]      (0, 0) circle [x radius= 3.35, y radius= 3.35]   ;
\draw  [dash pattern={on 0.84pt off 2.51pt}]  (337,343.5) -- (483,72.5) ;
\draw [shift={(483,72.5)}, rotate = 298.31] [color={rgb, 255:red, 0; green, 0; blue, 0 }  ][fill={rgb, 255:red, 0; green, 0; blue, 0 }  ][line width=0.75]      (0, 0) circle [x radius= 3.35, y radius= 3.35]   ;
\draw  [dash pattern={on 0.84pt off 2.51pt}]  (483,72.5) -- (449,221.5) ;
\draw [shift={(449,221.5)}, rotate = 147.85] [color={rgb, 255:red, 0; green, 0; blue, 0 }  ][line width=0.75]    (-5.59,0) -- (5.59,0)(0,5.59) -- (0,-5.59)   ;

\draw (197,38.4) node [anchor=north west][inner sep=0.75pt]    {$X_{0}$};
\draw (345,357.4) node [anchor=north west][inner sep=0.75pt]    {$X_{\sigma _{1}} =X_{\tau _{1}^{\lor }}$};
\draw (488,41.4) node [anchor=north west][inner sep=0.75pt]    {$X_{\sigma _{2}}$};
\draw (456,209.4) node [anchor=north west][inner sep=0.75pt]    {$X_{\sigma _{3}} =X_{\tau _{S}}$};
\draw (588,16.4) node [anchor=north west][inner sep=0.75pt]    {$X^{( 1)}$};
\draw (655,74.4) node [anchor=north west][inner sep=0.75pt]    {$X^{( 2:d)}$};
\draw (75,290.4) node [anchor=north west][inner sep=0.75pt]    {$-1$};
\draw (89,103.4) node [anchor=north west][inner sep=0.75pt]    {$1$};

\end{tikzpicture}
    \caption{Representation of the Walk on half-spaces algorithm. In this illustration the process starts above the strip, the first jump out of $\mathbb{H}_{1\uparrow}$ misses the strip, we start the algorithm again taking as initial position the first hit into $\mathbb{H}_{-1\downarrow}$. The process repeats and after $N=3$ crossings the process finally enters the strip. The red lines correspond to the $\mathtt{Cauchy}_1$ horizontal displacements which add up to $X^{(2:d)}_{\tau_S}$.}
    \label{fig:algorithm}
\end{figure}
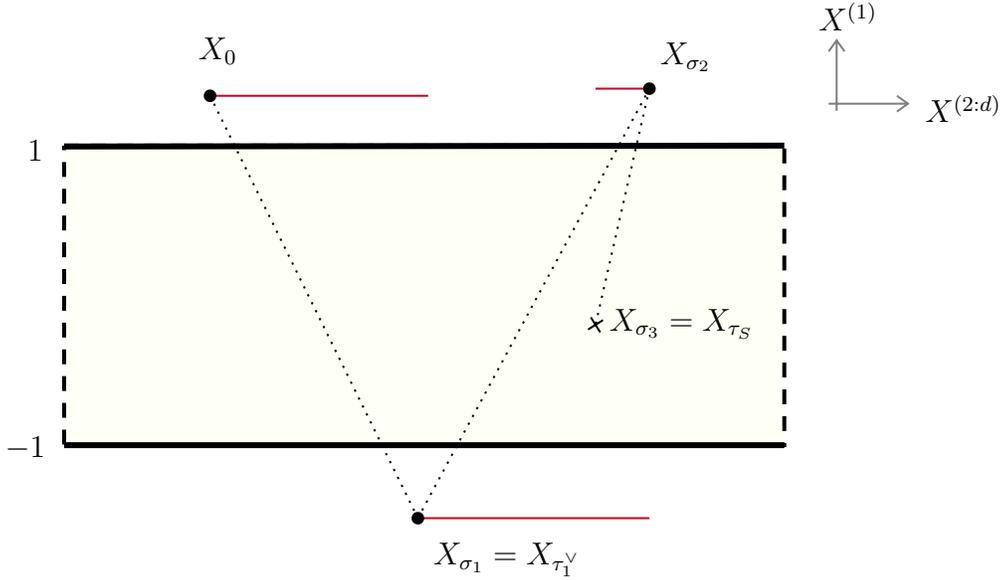

\begin{figure}[h!]
    \centering
    \includegraphics[width=0.3\textwidth]{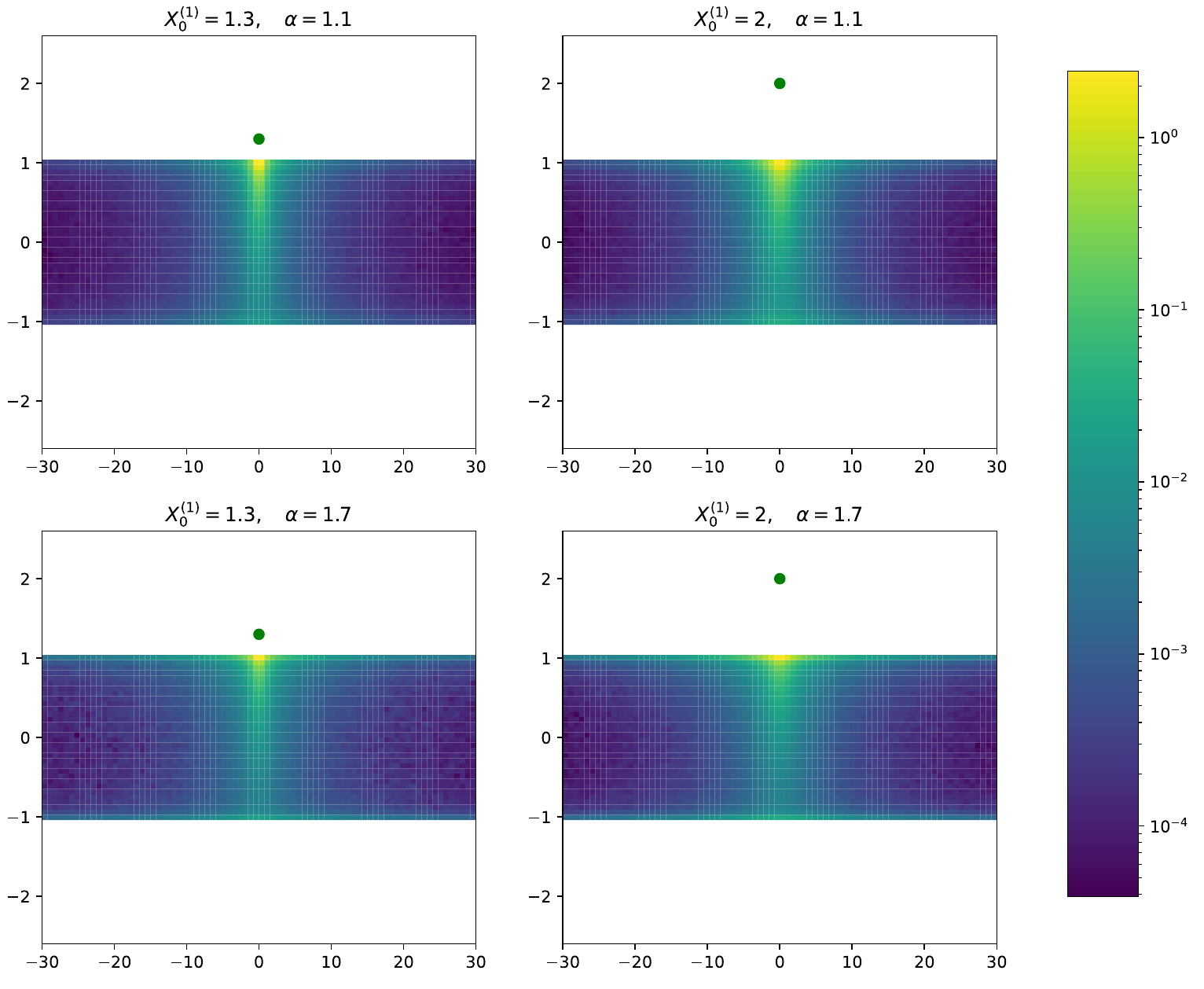}
        \includegraphics[width=0.3\textwidth, height = 0.25\textwidth]{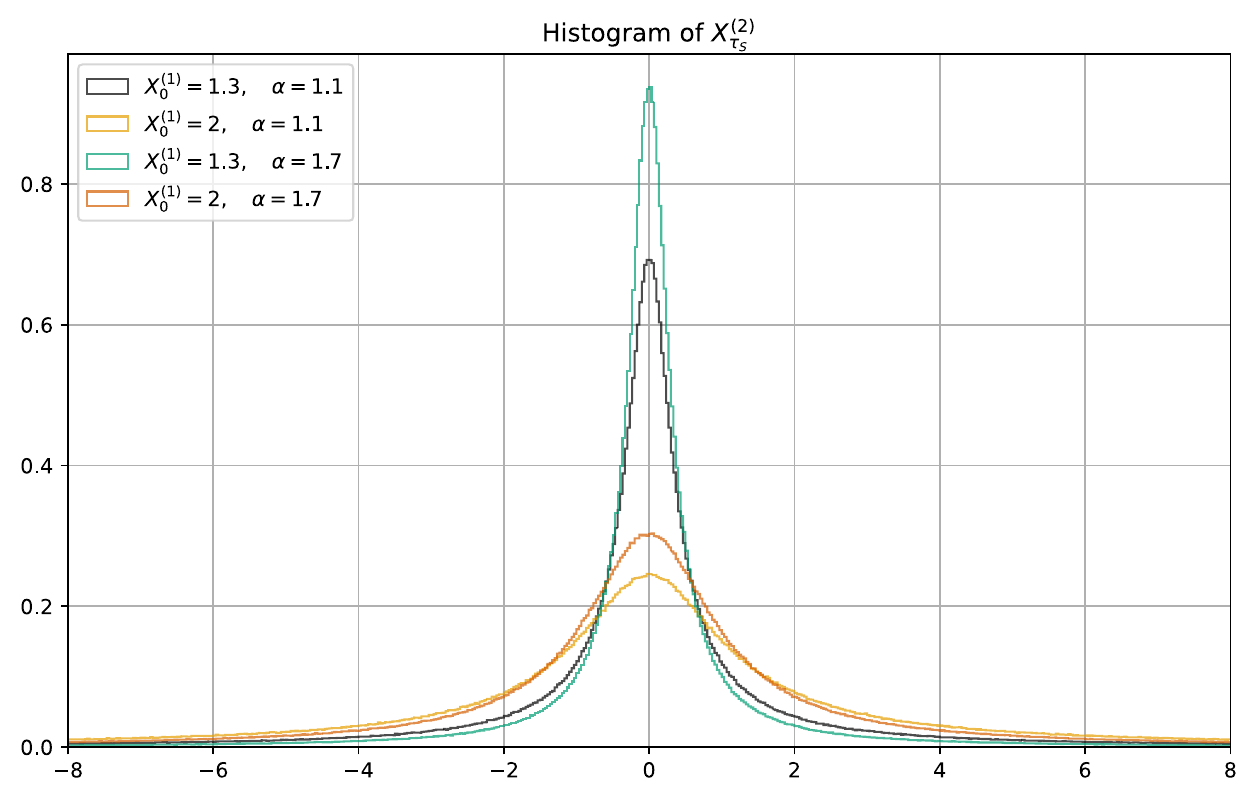}
    \includegraphics[width=0.3\textwidth]{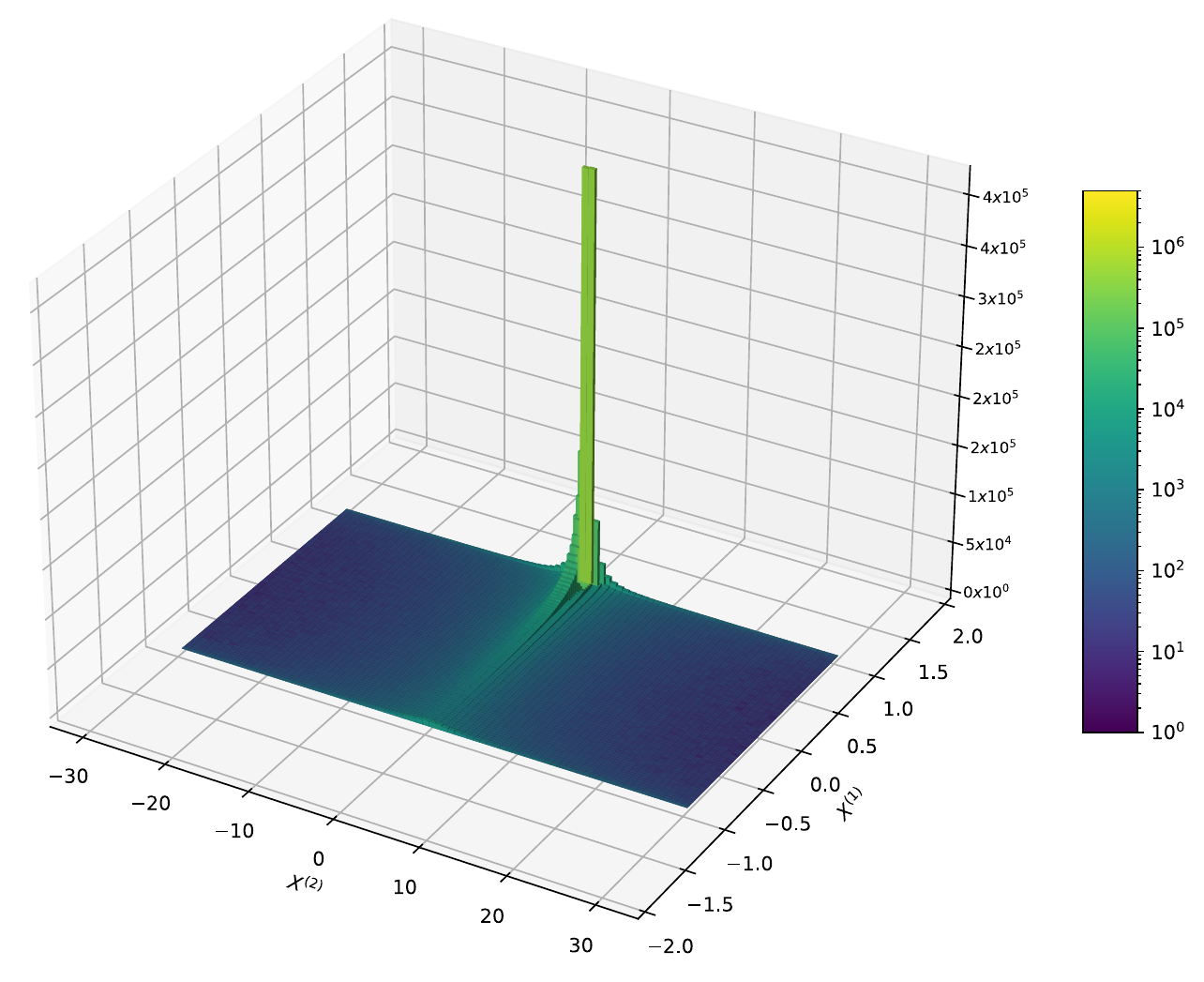}
    \caption{(left): Histograms of the simulations of first-hits into the strip for different values of $X_0=(x^{(1)},0)$ and $\alpha$. The green point represents the starting point $X_0$. 
    (centre): Histograms of the second coordinate at the instant of first-hit into the strip. Different values for the point of issue and $\alpha$ are considered. Instead of drawing solid bars, the histograms are drawn as step-like curves that outline the shape of the histogram, i.e. we see the edges of each bin without any filled area underneath. (right): A 3d histogram of the first hits into the strip.}
    \label{fig:joint}
\end{figure}

%
%
%

\section{Proofs of Lemmas \ref{cor:marginal} and \ref{bgrfe}}
In order to prove Lemmas \ref{cor:marginal} and  \ref{bgrfe} we use 
a method of converting results for first entry into a ball into first entry into a half space by expanding the radius of the sphere asymptotically so that, with re-centring, the interior of the ball becomes a half-space. We call this the {\it  flat earth theory}, in the sense that, from an infinitesimally localised perspective, the surface of a sphere appears to be that of a flat surface. 
 
\smallskip

The lemma we present below combines a geometric argument with the Lévy and scaling properties of the stable process $(X,\mathbb{P})$. 

\begin{lemma}[Flat earth approximation] \label{lem:flatearth} Let $\tau_{r}^{\oplus}=\inf\{t>0:|X_t|<r\}$  and suppose 
$f:\mathbb{R}^d\times\mathbb{R}^d\to \mathbb{R}$ is bounded and measurable. Then
    \begin{equation}
\mathbb{E}_x\left[ f(X_{\tau_r^{\vee}}, X_{\tau_r^{\vee}-})\right]=\lim_{R\to \infty}\mathbb{E}_{x+R\mathbf{e}_1}\left[f(X_{\tau_{R+r}^{\oplus}} - R\mathbf{e}_1, X_{\tau_{R+r}^{\oplus}-} - R\mathbf{e}_1),\tau_{R}^{\oplus}<\infty\right],
\end{equation}
where $\mathbf{e}_1 = (1,0,\cdots,0)\in\mathbb{R}^d$.
\end{lemma}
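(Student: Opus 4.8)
The plan is to reduce the right-hand side to an expectation under $\mathbb{P}_x$ via spatial homogeneity and then exploit the fact that the geometry makes the passage to the limit essentially trivial. First I would use stationary and independent increments: under $\mathbb{P}_{x+R\mathbf{e}_1}$ the process $X$ has the law of $Y+R\mathbf{e}_1$, where $Y$ has law $\mathbb{P}_x$. Hence $\{|X_t|<\rho\}$ corresponds to $\{Y_t\in B(-R\mathbf{e}_1,\rho)\}$ and $X_{\tau_\rho^{\oplus}}-R\mathbf{e}_1$ corresponds to $Y_{T_\rho}$, where $T_\rho:=\inf\{t>0:Y_t\in B(-R\mathbf{e}_1,\rho)\}$. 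Since $B(-R\mathbf{e}_1,R)\subseteq B(-R\mathbf{e}_1,R+r)$ we have $T_R\ge T_{R+r}$, so on $\{T_R<\infty\}$ the quantity $Y_{T_{R+r}}$ is well defined, and the right-hand side equals $\mathbb{E}_x\big[f(Y_{T_{R+r}},Y_{T_{R+r}-})\,\mathbf{1}_{\{T_R<\infty\}}\big]$.

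The key geometric observation is that $B(-R\mathbf{e}_1,R+r)\subseteq\mathbb{H}_{r\downarrow}$: if $|z+R\mathbf{e}_1|<R+r$ then $z^{(1)}+R\le|z+R\mathbf{e}_1|<R+r$, so $z^{(1)}<r$. Writing $z\in B(-R\mathbf{e}_1,R+r)$ as $2R(r-z^{(1)})>|z|^2-r^2$, one checks that the balls $B(-R\mathbf{e}_1,R+r)$ are nested and increasing in $R$ with $\bigcup_{R>0}B(-R\mathbf{e}_1,R+r)=\mathbb{H}_{r\downarrow}$ (the bounding sphere is tangent to $\{z^{(1)}=r\}$ at the single point $r\mathbf{e}_1$ and bulges into $\mathbb{H}_{r\downarrow}$, flattening as $R\to\infty$). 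The inclusion gives $T_{R+r}\ge\tau_r^{\vee}$ for every $R$. For the reverse bound I would invoke the no-creeping property recalled in the introduction: at $\tau_r^{\vee}$ one has $Y^{(1)}_{\tau_r^{\vee}}<r$ strictly, so $Y_{\tau_r^{\vee}}$ lies in the open half-space at finite distance from the hyperplane, hence $Y_{\tau_r^{\vee}}\in B(-R\mathbf{e}_1,R+r)$ as soon as $2R(r-Y^{(1)}_{\tau_r^{\vee}})>|Y_{\tau_r^{\vee}}|^2-r^2$, an $\mathbb{P}_x$-a.s.\ finite threshold on $R$. For such $R$, $T_{R+r}\le\tau_r^{\vee}$, so in fact $T_{R+r}=\tau_r^{\vee}$, and therefore $Y_{T_{R+r}}=Y_{\tau_r^{\vee}}$ and $Y_{T_{R+r}-}=Y_{\tau_r^{\vee}-}$, for all sufficiently large $R$.

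Applying the same argument with $r$ replaced by $0$ shows $B(-R\mathbf{e}_1,R)\uparrow\mathbb{H}_{0\downarrow}$ and $T_R=\tau_0^{\vee}$ for all $R$ large; since $X^{(1)}$ oscillates, $\tau_0^{\vee}<\infty$ $\mathbb{P}_x$-a.s., so $\mathbf{1}_{\{T_R<\infty\}}=1$ for all $R$ large. Consequently the integrand $f(Y_{T_{R+r}},Y_{T_{R+r}-})\mathbf{1}_{\{T_R<\infty\}}$ is $\mathbb{P}_x$-a.s.\ eventually equal to the constant $f(Y_{\tau_r^{\vee}},Y_{\tau_r^{\vee}-})$; as $f$ is bounded, dominated convergence yields $\lim_{R\to\infty}\mathbb{E}_{x+R\mathbf{e}_1}\big[f(X_{\tau_{R+r}^{\oplus}}-R\mathbf{e}_1,X_{\tau_{R+r}^{\oplus}-}-R\mathbf{e}_1),\tau_R^{\oplus}<\infty\big]=\mathbb{E}_x\big[f(X_{\tau_r^{\vee}},X_{\tau_r^{\vee}-})\big]$, which is the claim.

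The only substantive input beyond elementary geometry and the increment structure is the absence of creeping across the hyperplane, which is precisely what upgrades ``$T_{R+r}\to\tau_r^{\vee}$'' to the exact identity ``$T_{R+r}=\tau_r^{\vee}$ for $R$ large''; I expect this to be the single place where the jump nature of $X$ enters, with everything else routine. I would also flag the standing assumption $x^{(1)}>r$ implicit in the intended applications, under which $\tau_r^{\vee}>0$ and $x+R\mathbf{e}_1\notin B(-R\mathbf{e}_1,R+r)$ for large $R$.
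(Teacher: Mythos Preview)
Your argument is correct and follows essentially the same route as the paper: exhaust $\mathbb{H}_{r\downarrow}$ by the balls $B(-R\mathbf{e}_1,R+r)$, observe that the associated hitting times eventually \emph{equal} $\tau_r^{\vee}$ pathwise, and conclude by dominated convergence together with a spatial shift via stationary increments. The only cosmetic differences are that you perform the shift first rather than last, and that you handle the indicator $\{\tau_R^{\oplus}<\infty\}$ explicitly via $\tau_0^{\vee}<\infty$ and make the role of no-creeping explicit, whereas the paper leaves both points implicit.
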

\begin{proof}
 We start by approximating $\mathbb{H}_{r\downarrow}=\{z\in \mathbb{R}^d:z^{(1)}<r\}$, $r>0$, by an increasing sequence of balls $\mathbb{B}_{(-R\mathbf{e}_1,R+r)}=\{z\in \mathbb{R}^d: |z+R\mathbf{e}_1|<R+r\}\subseteq \mathbb{H}_{r\downarrow}$ where  $R>0$. We have  
$$\lim_{R\to \infty}\mathbb{B}_{(-R\mathbf{e}_1,R+r)}=\mathbb{H}_{r\downarrow}.$$
To see why, we only have to show that $\mathbb{H}_{r\downarrow}\subseteq\lim_{R\to \infty}\mathbb{B}_{(-R\mathbf{e}_1,R+r)}$. Indeed, take an arbitrary point in $\mathbb{H}_{r\downarrow}$ written $y=(r-r_0,y^{(2:d)})$. An elementary calculation shows that if $R>(2r_0)^{-1}(r_0^2+|y^{(2:d)}|^2) -r$, then $y\in \mathbb{B}_{(-R\mathbf{e}_1,R+r)}$.

\begin{figure}[ht!]
    \centering
    \tikzset{every picture/.style={line width=0.75pt}} 
\begin{tikzpicture}[x=1pt,y=1pt,yscale=-1,xscale=1]
\draw  [draw opacity=0][fill={rgb, 255:red, 155; green, 155; blue, 155 }  ,fill opacity=0.09 ] (131.5,61.5) -- (422.5,61.5) -- (422.5,210.75) -- (131.5,210.75) -- cycle ;
\draw   (248.63,90.38) .. controls (248.63,74.7) and (261.33,62) .. (277,62) .. controls (292.67,62) and (305.38,74.7) .. (305.38,90.38) .. controls (305.38,106.05) and (292.67,118.75) .. (277,118.75) .. controls (261.33,118.75) and (248.63,106.05) .. (248.63,90.38) -- cycle ;
\draw   (224.88,114.13) .. controls (224.88,85.34) and (248.21,62) .. (277,62) .. controls (305.79,62) and (329.13,85.34) .. (329.13,114.13) .. controls (329.13,142.91) and (305.79,166.25) .. (277,166.25) .. controls (248.21,166.25) and (224.88,142.91) .. (224.88,114.13) -- cycle ;
\draw    (131.5,61.5) -- (422.5,61.5) ;
\draw   (204.13,134.88) .. controls (204.13,94.63) and (236.75,62) .. (277,62) .. controls (317.25,62) and (349.88,94.63) .. (349.88,134.88) .. controls (349.88,175.12) and (317.25,207.75) .. (277,207.75) .. controls (236.75,207.75) and (204.13,175.12) .. (204.13,134.88) -- cycle ;
\draw  [dash pattern={on 0.84pt off 2.51pt}]  (131.5,79) -- (422.5,79) ;
\draw  [dash pattern={on 0.84pt off 2.51pt}]  (276.5,43.5) -- (276.5,248.75) ;
\draw (278.5,51.4) node [anchor=north west][inner sep=0.75pt]  [font=\scriptsize]  {$r$};
\draw (273.5,74.4) node [anchor=north west][inner sep=0.75pt]  [font=\scriptsize]  {$0$};
\draw (382.5,182.4) node [anchor=north west][inner sep=0.75pt]  [font=\scriptsize]  {$\mathbb{H}_{r\downarrow }$};
\draw (281,173.4) node [anchor=north west][inner sep=0.75pt]  [font=\tiny]  {$\mathbb{B}_{(-Re1,R+r)}$};
    \end{tikzpicture}
    \caption{Approximation of $\mathbb{H}_{r\downarrow}$ by $\mathbb{B}_{(-R\mathbf{e}_1,R+r)}$.}
    \label{fig:flat earth}
\end{figure}
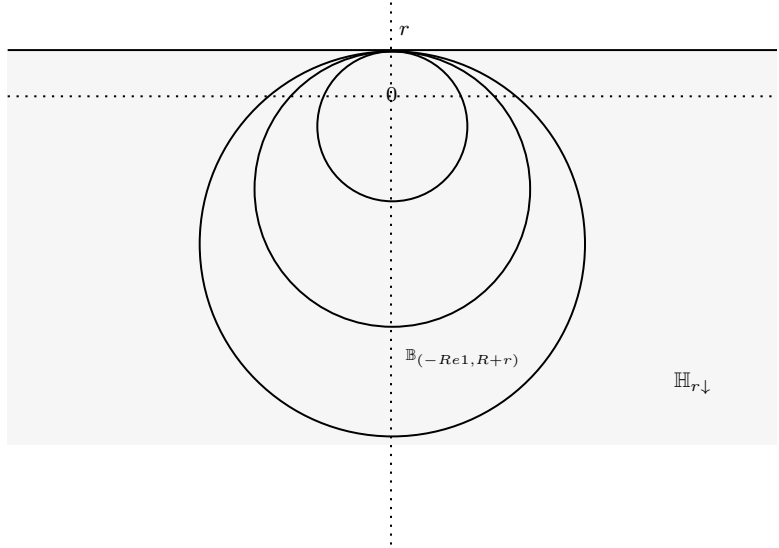

The next assertion is that we can define a decreasing sequence of stopping times which will limit to $\tau_r^{\vee}$. If $\tau_{\mathbb{B}_{R+r}}=\inf\{t>0:X_t\in  \mathbb{B}_{(-R\mathbf{e}_1,R+r)}\}$, we claim that, for all $x\in\mathbb{R}^d$ with $x^{(1)}>r$, $ \mathbb{P}_x$-almost surely it holds that
$$
\tau_{\mathbb{B}_{R+r}}\downarrow\tau_r^{\vee},
$$
as $R\to\infty$.

\smallskip

The first thing we need to point out is that $(\tau_{\mathbb{B}_{R+r}}, R>0)$ is a decreasing sequence of stopping times, hence its limit is a stopping time. For $\mathbb{P}_x$-almost every $\omega$ let $y(\omega)=X_{\tau_r^{\vee}(\omega)}(\omega)$. Then there exists a $R_0(\omega)>0$ such that $y(\omega)\in \mathbb{B}_{(-R_0(\omega)\mathbf{e}_1,R_0(\omega)+r)}$, implying by convexity that $\tau_{\mathbb{B}_{R+r}}(\omega)=\tau_r^{\vee}(\omega)$ for all $R>R_0.$ In which case $X_{\tau_r^{\vee}(\omega)}(\omega) = X_{\tau_{\mathbb{B}_{R+r}}(\omega)}(\omega)$ and $X_{\tau_r^{\vee}(\omega)-}(\omega) = X_{\tau_{\mathbb{B}_{R+r}-}(\omega)}(\omega)$.
\smallskip

Hence, for any $x\in \mathbb{H}_{r\uparrow}$ appealing to dominated convergence (as $f$ is bounded), 
\begin{align*}
\mathbb{E}_x\left[ f(X_{\tau_r^{\vee}}, X_{\tau_r^{\vee}-})\right]&=\mathbb{E}_x\left[\lim_{R\to \infty} f(X_{\tau_{\mathbb{B}_{R+r}}}, X_{\tau_{\mathbb{B}_{R+r}}-})\right]\\
&=\lim_{R\to \infty}\mathbb{E}_x\left[f(X_{\tau_{\mathbb{B}_{R+r}}}, X_{\tau_{\mathbb{B}_{R+r}}-}),\tau_{\mathbb{B}_{R+r}}<\infty\right]\\
&=\lim_{R\to \infty}\mathbb{E}_{x+R\mathbf{e}_1}\left[f(X_{\tau_{R+r}^{\oplus}} - R\mathbf{e}_1, X_{\tau_{R+r}^{\oplus}-}- R\mathbf{e}_1),\tau_{R+r}^{\oplus}<\infty\right],
\end{align*}
where  the last equality is given by stationary and independent increments.
\end{proof}

Next we give the proof of Lemma \ref{bgrfe} which is almost trivial in light of Lemma \ref{lem:flatearth}. We leave the proof of Lemma  \ref{cor:marginal} to the reader, pointing to Corollary 1.4 of \cite{DEEP3} as the starting expression from which limits can be taken.

\begin{proof}
Applying Lemma \ref{lem:flatearth} using the explicit formula for the distribution of the first hit inside a ball found in \cite{blumenthal1961distribution}, we have that

\begin{align*}
    \mathbb{E}_x[f(X_{\tau_r^{\vee}})]&=\lim_{R\to \infty}\mathbb{E}_{x+R\mathbf{e}_1}\left[f(X_{\tau_{R+r}^{\oplus}} - R\mathbf{e}_1 ); \tau_{R+r}^{\oplus}<\infty\right]\\
    & =\lim_{R\to \infty} C_{\alpha,d} \int_{|y|<R+r}  f(y - R\mathbf{e}_1)\frac{|(R+r)^2-|x+R\mathbf{e}_1|^2|^{\alpha/2}}{|(R+r)^2-|y |^2|^{\alpha/2}}|(x+R\mathbf{e}_1)-y|^{-d} {\rm d}y \\
     & =\lim_{R\to \infty} C_{\alpha,d} \int_{|z+R\mathbf{e}_1|<R+r}   f(z)\frac{|(R+r)^2-|x+R\mathbf{e}_1|^2|^{\alpha/2}}{|(R+r)^2-|y +R\mathbf{e}_1|^2|^{\alpha/2}}|x -z|^{-d} {\rm d}y \\
    &=C_{\alpha,d} \int_{\mathbb{H}_{r\downarrow}}  f(z)|x-y|^{-d}\lim_{R\to \infty}\frac{|2R(r-x^{(1)})+r^2-|x|^2|^{\alpha/2}}{|2R(r-y^{(1)})+r^2-|y|^2|^{\alpha/2}}{\rm d}z\\
    &=C_{\alpha,d}\int_{\mathbb{H}_{r\downarrow}}  f(z)|x-y|^{-d}\frac{|r-x^{(1)}|^{\alpha/2}}{|r-y^{(1)}|^{\alpha/2}}{\rm d}y,
\end{align*}
which completes the proof. 
\end{proof}

\begin{remark}\rm
To complete this section, we address the rather obvious question as to why flat Earth theory cannot be used to address Theorem \ref{thm:pcr}. In theory this may well be possible but we could not see a way to ensure that the point of closest reach to the origin preceding first entry to the ball $\mathbb{B}_{(-R\mathbf{e}_1,R+r)}$ asymptotically converges to $X_{G(\tau^\vee_r)}$. For this reason, we take a different approach to proving Theorem \ref{thm:pcr}, which uses the  so-called directional excursion theory in the spirit of \cite{burdzy}.
\end{remark}

\section{Directional excursion theory and proof of Theorem \ref{thm:pcr}}
Let $\underline{X}^{(1)}_t:=\inf_{s\leq t}X^{(1)}_s$ be the running infimum of the first coordinate of $X$, and $R^{(1)}_t:=X^{(1)}_t-\underline{X}^{(1)}_t$ the reflection at the infimum of $X^{(1)}$. Under $\mathbb{P}_x$, the process $R^{(1)}=(R^{(1)}_t:t\geq 0)$ is a strong Markov process for which $0$ is regular with respect to $(0,\infty)$. Moreover, the joint process $(R^{(1)}_t,X _t)_{t\geq 0}$ is a $ \mathbb{R}^+\times\mathbb{R}^d$-valued strong Markov process with transition semigroup $$P_tf(u, x):=\mathbb{E}_x\left[ f(u\vee R^{(1)}_t, X_t) \right],$$
for every $t>0$ and every nonnegative measurable function $f: \mathbb{R}^+\times \mathbb{R}^{d}\to \mathbb{R}^+$. We shall work with the canonical realization of $(R^{(1)}_t, X_t)_{t\geq 0}$ on the sample space $\mathbb{D}(\mathbb{R}^+\times \mathbb{R}^{d})$ of càdlàg paths mapping $\mathbb{R}^+$ to $\mathbb{R}^d$ with appended cemetery state, which is where the path is sent at its lifetime (which may be infinite).
\smallskip

We define $M:=\{t\geq 0: R^{(1)}_t=0\}$ and $\bar{M}$ its closure in $\mathbb{R}^+$. The set $\mathbb{R}\setminus \bar{M}$ is an open set and can be written as a union of intervals. We use $\mathtt G$ and $\mathtt D$ to denote the sets of left and right endpoints of such intervals. If we define $\mathcal{R}:=\inf\{t>0:t\in \bar{M}\}$, the downward regularity of $X^{(1)}$ implies that for every $x\in \mathbb{R}^{d}$, with probability one $\mathbb{P}_{x}(\mathcal{R}=0)=1$.
\smallskip

By Theorem 4.1 in \cite{maisonneuve} and stationary independent increments, there exist a continuous additive functional $t\mapsto L_t$ of $(R^{(1)}, X)$ carried by $\{0\}
\times \mathbb{R}^{d}$ and a kernel $\mathtt{N}$ from $ \mathbb{R}^+\times \mathbb{R}^{d}$ into $\mathbb{D}( \mathbb{R}^+\times \mathbb{R}^d)$ satisfying $\mathtt{N}_{x}(\mathcal{R}=0)=0$ and $\mathtt{N}_{x}\left[1-{\rm e}^{-\mathcal{R}} \right]\leq 1$ such that for any nonnegative predictable process $Z$ and any nonnegative function $f$ which is measurable with respect to $\sigma((R^{(1)}_t, X_t)_{t\geq 0})$  then 
\begin{equation}
    \mathbb{E}_{x}\left[ \sum_{s\in G}Z_sf\circ \theta_s \right]=\mathbb{E}_{x}\left[\int_0^{\infty} Z_s\,\mathtt{N}_{X_s}(f) \dd L_s \right],
    \label{generalexcursioncomp}
\end{equation}
where $\theta_s$ is the Markov shift operator and  $\mathtt{N}_{x}(f)=\int f \dd \mathtt{N}_{x}$. In the setting that $f$ depends only on the process $(R^{(1)}, X^{(2:d)})$ we can reduce the form of $\mathtt{N}_{x} = \mathtt{N}_{(x^{(1)}, x^{(2:d)})}$ to being dependent only on $x^{(2:d)}$, in which case we will write it as $\mathbb{N}_{x^{(2:d)}}$.
\smallskip

To define the excursion process we start by expressing the endpoints of the intervals of excursions. Let $\mathtt{G}=\cup_{t>0}\mathtt{g}_t$ and $\mathtt{D}=\cup_{t>0}\mathtt{d}_t$ where $\mathtt{g}_t:=\sup\{s<t:X^{(1)}_s=\underline{X}^{(1)}_{s}\}$ and $\mathtt{d}_t:=\inf\{s>t:X^{(1)}_s=\underline{X}^{(1)}_{s}\}$.
Hence, for all $t>0$ such that $\mathtt{d}_t>\mathtt{g}_t$, the excursions of $X$ from its directional minimum in the first axis, or equivalently the excursions of $(X^{(1)}-\underline{X}^{(1)},X^{(2:d)})$ from the set $\{0\}\times \mathbb{R}^{d-1}$ are encoded in the following process

\begin{equation}\label{excfrominf}
    (\epsilon^{(1)}_{\mathtt{g}_t}(s),\epsilon^{(2:d)}_{\mathtt{g}_t}(s)):=(X^{(1)}_{\mathtt{g}_t+s}-X^{(1)}_{\mathtt{g}_t},X^{(2:d)}_{\mathtt{g}_{t}+s}) \quad 0\leq s \leq \mathtt{d}_t-\mathtt{g}_t=:\zeta_{\mathtt{g}_t}.
\end{equation} 
Such excursions live in the space $\mathbb{D}(\mathbb{R}^+\times{\mathbb{R}^{d-1}})$ of càdlàg paths in $\mathbb{R}^+\times{\mathbb{R}^{d-1}}$,
such that
$$(\epsilon^{(1)},\epsilon^{(2:d)})=\left((\epsilon^{(1)}(t),\epsilon^{(2:d)}(t)):t\leq \zeta \right) \text{ with lifetime }\zeta=\inf\{s>0:\epsilon^{(1)}(s)<0\}$$ such that 
$\left(\epsilon^{(1)}(0),\epsilon^{(2:d)}(0)\right) \in \{0\}\times \mathbb{R}^{d-1},$  $(\epsilon^{(1)}(s),\epsilon^{(2:d)}(s))\in (0,\infty)\times \mathbb{R}^{d-1}$ and  $\left(\epsilon^{(1)}(\zeta),\epsilon^{(2:d)}(\zeta)\right) \in (-\infty,0)\times \mathbb{R}^{d-1}.$
\smallskip

Reinterpreting \eqref{generalexcursioncomp} get the so-called last exit formula  
    \begin{align} \notag
        \mathbb{E}_{(x^{(1)},x^{(2:d)})}&\left[ \sum_{g\in \mathtt{G}} F\left(X_s,  s<g\right)H\left((\epsilon^{(1)}_g,\epsilon^{(2:d)}_g)\right) \right]\\ 
        &=\mathbb{E}_{(x^{(1)},x^{(2:d)})}\left[ \int_{0}^{\infty} F\left(X_s,  s<t\right)\mathbb{N}_{X^{(2:d)}_t}\left(H(\epsilon^{(1)},\epsilon^{(2:d)})\right){\rm d} L_t \right],
        \label{eq:exitformula:levy}
    \end{align}
    where $F$ is non-negative and  bounded  on the space of càdlàg paths on $\mathbb{R}^d$ and $H$ is measurable on $\mathbb{D}(\mathbb{R}^+\times{\mathbb{R}^{d-1}})$.
    Under the measures $(\mathbb{N}_{x^{(2:d)}}, x^{(2:d)}\in \mathbb{R}^{d-1})$, the process $(\epsilon^{(1)}(s),\epsilon^{(2:d)}(s);s<\zeta)$ is Markovian with the same semigroup as $X=(X^{(1)},X^{(2:d)})$ killed at its first hitting time of $(-\infty,0]\times \mathbb{R}^{d-1}$.
\smallskip

The couple $(L,\mathbb{N}_\cdot)$ is called an exit system. This pair is not unique, but once $L$ is chosen, the measures, $(\mathbb{N}_{x^{(2:d)}}, x^{(2:d)}\in \mathbb{R}^{d-1})$ are determined but for a $L$-neglectable set, i.e. a set $A$ such that $$\mathbb{E}_{x,\theta}\left[ \int_0^{\infty}\mathbf{1}\left((X^{(1)}_s,X^{(2:d)}_s)\in A\right){\rm d}L s\right]=0.$$
\smallskip

\begin{proof}[Proof of Theorem \ref{thm:pcr}]
Let $x\in \mathbb{H}_{0\uparrow}$, the exit formula \eqref{eq:exitformula:levy} now gives us, for bounded measurable $g$ on $\mathbb{H}_{0\uparrow}$,
\begin{align}\nonumber
    \mathbb{E}_x\left[ g(X_{G(\tau_0^{\vee})})\right]&=\mathbb{E}_x\left[\sum_{t \in \mathtt{G}}g(X_t)\mathbf{1}(X^{(1)}_t+\epsilon^{(1)}_t(\zeta)<0)\right]\\ \nonumber
    &=\mathbb{E}_x\left[\int_0^{\infty}g(X_t)\mathbb{N}_{X_t^{(2:d)}}(\epsilon^{(1)}_t(\zeta)<-y)\big|_{y = X^{(1)}_t} {\rm d}L_t \right].
\end{align}
In the above expression, the event in the excursion measure depends only on the first coordinate of the excursion process. Hence it is independent of the excursion measure index $X^{(2:d)}_t$ and can be written in terms of the jump measure, say $\Pi_{-}^{(1)}$,  of the descending ladder  process associated to $X^{(1)}$, which is known to be an $\alpha/2$-stable subordinator. More precisely, 
\[
\mathbb{N}_{x^{(2:d)}}(\epsilon^{(1)}_t(\zeta)<-y) = \Pi^{(1)}_-(y,\infty) = \frac{1}{\Gamma(1-(\alpha/2))}y^{-\alpha/2}, \qquad y>0.
 \]

As such, we have
\begin{align}\nonumber
  \mathbb{E}_x\left[ g(X_{G(\tau_0^{\vee})})\right]
  &=\mathbb{E}_x\left[\int_0^{\infty}g(X_t)\frac{1}{\Gamma(1-(\alpha/2))}(X^{(1)}_t)^{-\alpha/2} {\rm d}L_t \right]\\ \nonumber
    &=\frac{1}{\Gamma(1-(\alpha/2))} \mathbb{E}_x\left[\int_0^{\infty}g(H_t^{(1)-},H^{(2:d)-}_{t})(H^{(1)-}_t)^{-\alpha/2} {\rm d}t \right]\\ 
    &=\frac{1}{\Gamma(1-(\alpha/2))}\int_{\{y^{(1)}<x^{(1)}\}}g(y^{(1)},y^{(2:d)})(y^{(1)})^{-\alpha/2}U_x^-({\rm d}y),
    \label{potential}
\end{align} 
where $H^{(1)-}_t: =X^{(1)}_{L^{-1}_t}$ and $H^{(2:d)-}_{t}: =  X^{(2:d)}_{L_t^{-1}}$, $t\geq0$, and we define the descending ladder potential measure
\begin{equation}
    U_x^-({\rm d}y):=\int_0^{\infty}  \mathbb{P}_x\left((H_t^{(1)-},H^{(2:d)-}_{t})\in {\rm d}y\right){\rm d}t \quad y^{(1)}<x^{(1)}, y^{(2:d)}\in \mathbb{R}^{d-1}.
    \label{potential-d}
\end{equation}
We note that the integral on the right-hand side of \eqref{potential} is over the whole of $\mathbb{R}^d$ with the restriction that $\{y^{(1)}<x^{(1)}\}$. In the sequel, we will see many such integrals where it is implicit that we are integrating over $\mathbb{R}^d$ and only the restriction over this space is indicated in the delimiter.
\smallskip

In conclusion, we showed that the law of $X_{G(\tau_0^{\vee})}$ under $\mathbb{P}_x$ (up to a constant) is nothing but  $U_x^-({\rm d}y)(y^{(1)})^{-\alpha/2}$. 
We therefore focus on identifying $U_x^-({\rm d}y)$. The proof we follow is guided by the calculations leading to Theorem 1.1 in \cite{DEEP3}.
\smallskip

Let us define, for $x\in \mathbb{R}^d$ such that $x^{(1)}>r>0$, $\delta>0$, and continuous, positive and bounded $f$ on $\mathbb{R}^+\times \mathbb{R}^{d-1}$,
\begin{equation}
    \Delta_r^{\delta}f(x)=\frac{1}{\delta}\mathbb{E}_{x}\big[f(r,X^{(2:d)}_{G(\tau_0^{\vee})}), X^{(1)}_{G(\tau_0^{\vee})} \in [r-\delta, r]\big].
    \label{deltadef}
\end{equation}
We want to  establish  the limit as $\delta \to 0$. In order to do so, we start by conditioning on the first entry into the half-space $\mathbb{H}_{r\downarrow}$, this can be done by appealing to Lemma \ref{bgrfe}, which we have already proved. 
We have
\begin{align} \nonumber
    \Delta_r^{\delta}f(x)&=\frac{1}{\delta} \int_{\{r-\delta \leq y^{(1)} \leq r\}} \mathbb{P}_x(X_{\tau_r^{\vee}}\in {\rm d}y) \mathbb{E}_{y}\big[f(r,X^{(2:d)}_{G(\tau_0^{\vee})}), X^{(1)}_{G(\tau_0^{\vee})} \in [r-\delta, y^{(1)}]\big]\\ \nonumber
    &=\frac{1}{\delta}C_{\alpha,d} \int_{\{ r-\delta \leq y^{(1)} \leq r\}}{\rm d}y |x-y|^{-d}\frac{|r-x^{(1)}|^{\alpha/2}}{|r-y^{(1)}|^{\alpha/2}}\mathbb{E}_{y}\big[f(r,X^{(2:d)}_{G(\tau_0^{\vee})}), X^{(1)}_{G(\tau_0^{\vee})} \in [r-\delta, y^{(1)}]\big]\\ \nonumber
    &=\frac{1}{\delta}C_{\alpha,d} \int_{\{r-\delta \leq y^{(1)} \leq r\}}{\rm d}y |x-y|^{-d}\frac{|r-x^{(1)}|^{\alpha/2}}{|r-y^{(1)}|^{\alpha/2}}\\
    &\hspace{4cm}\frac{1}{\Gamma(1-(\alpha/2))} \int_{\{r-\delta<z^{(1)}\leq y^{(1)}\}}f(r,z^{(2:d)})(z^{(1)})^{-\alpha/2}U_y^-({\rm d}z),
    \label{innerintegral}
\end{align}
where the third equality is given by \eqref{potential}. 

\smallskip

Next we provide a limiting approximation for  the inner integral  of \eqref{innerintegral} as $\delta \to 0$. From now on we will write $\tilde{U}_x^-({\rm d}z)=(z^{(1)})^{-\alpha/2}U_x^-({\rm d}z)$, $z\in\mathbb{R}^{d}$ and recall that this is proportional to the law of $X_{G(\tau_0^{\vee})}$ under $\mathbb{P}_x$.
We claim that  
\begin{equation}
    \lim_{\delta \to 0}  \sup_{\{y \in {\mathbb{R}^d}: y^{(1)} \in (r-\delta,r]\}}\bigg \lvert \frac{ \int_{\{ r-\delta<z^{(1)}\leq y^{(1)}\}}f(r,z^{(2:d)})\tilde{U}_y^-({\rm d}z)}{ \int_{\{r-\delta<z^{(1)}\leq y^{(1)}\}}\tilde{U}_y^-({\rm d}z)}-f(r,y^{(2:d)}) \bigg \rvert = 0.
    \label{first claim}
\end{equation}

Suppose that $H_{r,\delta, \varepsilon}(y)\subset\mathbb{R}^d$ is the solid cylinder defined by  $\{ x\in \mathbb{R}^{d}:  |x^{(2:d)}-y^{(2:d)}|\leq \varepsilon \text{ and } r-\delta \leq x^{(1)}\leq r\}$, denote by $H_{\varepsilon}(y) \subseteq \mathbb{R}^d$ the section of the cylinder given by $\{x^{(2:d)}\in \mathbb{R}^{d-1}:  |x^{(2:d)}-y^{(2:d)}|\leq \varepsilon \}$. 
\smallskip

Thanks to the continuity of $f$, we can  choose $\varepsilon$ such that $$\sup_{z \in H_{ \varepsilon}(y)}|f(r,y^{(2:d)})-f(r,z^{(2:d)})|< \varepsilon',$$ for some $\varepsilon'\ll 1$. Then we have
\begin{align}
&\sup_{\{y \in {\mathbb{R}^d}: y^{(1)} \in (r-\delta,r]\}}\bigg \lvert \frac{ \int_{\{r-\delta<z^{(1)}\leq y^{(1)}\}}f(r,z^{(2:d)})\tilde{U}_y^-({\rm d}z)}{ \int_{\{r-\delta<z^{(1)}\leq y^{(1)}\}}\tilde{U}_y^-({\rm d}z)}-f(r,y^{(2:d)}) \bigg \rvert\notag\\
&\leq \varepsilon' + 2\|f\|_{\infty}\sup_{\{y \in {\mathbb{R}^d}: y^{(1)} \in (r-\delta,r]\}}\frac{ \int_{\{r-\delta<z^{(1)}\leq y^{(1)}\}}\tilde{U}_y^-({\rm d}z)\mathbf{1}_{\{z^{(2:d)}\notin H_{\varepsilon}(y)\}}}{ \int_{\{r-\delta<z^{(1)}\leq y^{(1)}\}}\tilde{U}^-_{y}({\rm d}z)}.
\label{controlfraction}
\end{align}
Analysing the fraction on the right-hand side of \eqref{controlfraction}, we notice that
\begin{align}
    &\frac{ \int_{\{r-\delta<z^{(1)}\leq y^{(1)}\}}\tilde{U}_y^-({\rm d}z)\mathbf{1}_{\{z^{(2:d)}\notin H_{\varepsilon}(y)\}}}{ \int_{\{r-\delta<z^{(1)}\leq y^{(1)}\}}\tilde{U}^-_{y}({\rm d}z)}\notag\\
    &=   \frac{ \int_{\{r-\delta<z^{(1)}\leq y^{(1)}\}}U_y^-({\rm d}z)(z^{(1)})^{-\alpha/2}\mathbf{1}_{\{z^{(2:d)}\notin H_{\varepsilon}(y)\}}}{ \int_{\{r-\delta<z^{(1)}\leq y^{(1)}\}}U^-_{y}({\rm d}z)(z^{(1)})^{-\alpha/2}}\notag\\
    &=c_\alpha \frac{ \int_{\{r-\delta<z^{(1)}\leq y^{(1)}\}}U_y^-({\rm d}z)\Pi_-^{(1)}(z^{(1)} - (r-\delta),\infty) \left(1- \frac{(r-\delta)}{z^{(1)}})\right)^{\alpha/2}\mathbf{1}_{\{z^{(2:d)}\notin H_{\varepsilon}(y)\}}}{ \int_{\{r-\delta<z^{(1)}\leq y^{(1)}\}}U^-_{y}({\rm d}z)(z^{(1)})^{-\alpha/2}}\notag\\
     &\leq c_\alpha \frac{ \int_{\{r-\delta<z^{(1)}\leq y^{(1)}\}}U_y^-({\rm d}z)\Pi_-^{(1)}(z^{(1)} - (r-\delta),\infty) \left(1- \frac{(r-\delta)}{y^{(1)}}\right)^{\alpha/2}\mathbf{1}_{\{z^{(2:d)}\notin H_{\varepsilon}(y)\}}}{ \int_{\{r-\delta<z^{(1)}\leq y^{(1)}\}}U^-_{y}({\rm d}z)(y^{(1)})^{-\alpha/2}}\notag\\
    &=c_\alpha\frac{\mathbb{P}_y(H^{(2:d)-}_{\sigma_{r-\delta}^{\vee}-}\notin H_{\varepsilon}(y) )}{\left(1- \frac{(r-\delta)}{y^{(1)}}\right)^{-\alpha/2}(y^{(1)})^{-\alpha/2}U^{(1)-}_{y^{(1)}}(r-\delta, y^{(1)}]},
    \label{mess}
\end{align}
where $c_\alpha = \Gamma(1-(\alpha/2))$,
$\sigma_{r-\delta}^{\vee}=\inf\{t>0:H^{(1)-}_t<r-\delta\}$ and, recalling $y = (y^{(1)}, y^{(2:d)})$,
\[
U_{y^{(1)}}^{(1)-}({\rm d}z^{(1)}):=\int_0^{\infty}  \mathbb{P}_{y}(H_t^{(1)-}\in {\rm d}z^{(1)}){\rm d}t \quad z^{(1)}<y^{(1)}.
\]

As $y\in [r-\delta,r]\times H_{\varepsilon}(y)$, if we denote by $\nu_{\varepsilon}=\inf\{t>0: H_t^{(2:d)-}\notin H_{\varepsilon}(y)\}$, which only depends on $y$ through $y^{(2:d)}$, by the right-continuity of the paths of $X$ we conclude that, for the numerator in \eqref{mess},
$$
\lim_{\delta \to 0}  \sup_{\{y \in {\mathbb{R}^d}: y^{(1)} \in (r-\delta,r]\}}\mathbb{P}_y(H^{(2:d)-}_{\sigma_{r-\delta}^{\vee}-}\notin H_{\varepsilon}(y))
\leq \lim_{\delta \to 0}  \sup_{\{y \in {\mathbb{R}^d}: y^{(1)} \in (r-\delta,r]\}}\mathbb{P}_y(\nu_{\varepsilon}<\sigma_{r-\delta}^{\vee})=0.
$$

On the other hand, recalling (cf. Chapter 3 of \cite{KJCbook}) that  the known descending ladder height occupation measure of a one-dimensional symmetric stable process (cf. Chapter 3 of \cite{KJCbook}) gives us
\begin{equation}
U_{y^{(1)}}^{(1)-}({\rm d}v) = \frac{1}{\Gamma(\alpha/2)}(y^{(1)}-v)^{(\alpha/2)-1},\qquad  v\leq y^{(1)},
\label{stablesubpot}
\end{equation} 
for the denominator of \eqref{mess} we have 
\begin{align*}
&\left(1- \frac{(r-\delta)}{y^{(1)}}\right)^{-\alpha/2}(y^{(1)})^{-\alpha/2}U^{(1)-}_{y^{(1)}}(r-\delta, y^{(1)}]\notag\\
&=\left(1- \frac{(r-\delta)}{y^{(1)}}\right)^{-\alpha/2}(y^{(1)})^{-\alpha/2} \int_0^{y^{(1)}-(r-\delta)} \frac{u^{(\alpha/2)-1}}{\Gamma(\alpha/2)}  {\rm d}u\notag\\
&=
\frac{1}{\Gamma(1+(\alpha/2))}.
\end{align*}
Hence the denominator of \eqref{mess} is nothing but a constant. 

\smallskip

The rigth-hand side of \eqref{mess} therefore goes to zero uniformly for $y^{(1)}\in(r-\delta, r]$ as $\delta\to0$, which, from \eqref{controlfraction}, means that we have proved the claim \eqref{first claim}.
%
%
%

\smallskip

Now returning to \eqref{innerintegral}, using  \eqref{first claim}, for small $\delta$ we have 
\begin{align}\nonumber
\Delta_r^{\delta}f(x)&=\frac{1}{\delta}C_{\alpha,d} \int_{\{ r-\delta \leq y^{(1)} \leq r\}}{\rm d}y |x-y|^{-d}\frac{|r-x^{(1)}|^{\alpha/2}}{|r-y^{(1)}|^{\alpha/2}}\\ \nonumber 
& \hspace{2cm}\frac{1}{\Gamma(1-(\alpha/2))}\int_{\{r-\delta<z^{(1)}\leq y^{(1)}\}} \bigg(f(r,z^{(2:d)})-f(r,y^{(2:d)})+f(r,y^{(2:d)}) \bigg)\tilde{U}_y^-({\rm d}z)\\ \nonumber
        &= D(\varepsilon)\Delta_r^{\delta}1(x)\\ 
    & \hspace{10pt} + \frac{1}{\delta}C_{\alpha,d} \int_{\{ r-\delta \leq y^{(1)} \leq r\}}{\rm d}y |x-y|^{-d}\frac{|r-x^{(1)}|^{\alpha/2}}{|r-y^{(1)}|^{\alpha/2}}  f(r,y^{(2:d)})\mathbb{P}_y(X_{G(\tau_0^{\vee})}^{(1)}>r-\delta),
    \label{limdelta}
\end{align}

where $|D(\varepsilon)|<\varepsilon$. 
\smallskip

We deal with the two terms on the right-hand side of \eqref{limdelta} sequentially. First, we note from the original definition in \eqref{deltadef} that 
\begin{align}
\lim_{\delta\to0}\Delta_r^{\delta}1(x) &= \frac{1}{\delta}\mathbb{P}_{x}(X^{(1)}_{G(\tau_0^{\vee})} \in [r-\delta, r]) \notag\\
&=\lim_{\delta\to0}\frac{1}{\delta} \int_{r-\delta}^r \frac{(x^{(1)}-u)^{(\alpha/2)-1}u^{-\alpha/2}}{\Gamma(\alpha/2)\Gamma(1-(\alpha/2))}\dd u\notag\\
&=\frac{(x^{(1)}-r)^{(\alpha/2)-1}r^{-\alpha/2}}{\Gamma(\alpha/2)\Gamma(1-(\alpha/2))}.
\label{Delta1limit}
\end{align}
where, we have appealed to the one-dimensional analogue of \eqref{potential}; using $U^{(1)-}_{y_0}$ in place of $U^+_y$. The limit in \eqref{Delta1limit} tells us that the first term on the right-hand side of \eqref{limdelta} can be ignored as $\delta\to0$. 

\smallskip

 For the second term on the right-hand side of \eqref{limdelta}, 
we note that its limit as $\delta\to0$ is equal to 
\begin{align}
&\lim_{\delta\to0} \Delta_r^{\delta}f(x) = C_{\alpha,d}\int_{\mathbb{R}^{d-1}}\dd y^{(2:d)}\frac{|r-x^{(1)}|^{\alpha/2}f(r,y^{(2:d)})}{|(x^{(1)}-r)^2+(x^{(2:d)}-y^{(2:d)})|^{d/2}} \notag\\
&\hspace{7cm}\times\lim_{\delta\to0}\frac{1}{\delta} \int_{r-\delta}^r{\rm d}y^{(1)} \frac{ \mathbb{P}_y(X_{G(\tau_0^{\vee})}^{(1)}>r-\delta)}{(r-y^{(1)})^{\alpha/2}}. 
\label{lastlimit}
\end{align}
For the limit on the right-hand  of \eqref{lastlimit}, we can compute 
\begin{align}
&\lim_{\delta\to0}\frac{1}{\delta} \int_{r-\delta}^r{\rm d}y^{(1)} \frac{ \mathbb{P}_y(X_{G(\tau_0^{\vee})}^{(1)}>r-\delta)}{(r-y^{(1)})^{\alpha/2}}\notag\\
&=\lim_{\delta\to0}\frac{1}{\delta} \int_{r-\delta}^r{\rm d}y^{(1)} (r-y^{(1)})^{-\alpha/2}
\int_{r-\delta}^{y^{(1)}} \frac{(y^{(1)}-u)^{(\alpha/2)-1}u^{-\alpha/2}}{\Gamma(\alpha/2)\Gamma(1-(\alpha/2))}\dd u\notag\\
&=\frac{r^{-\alpha/2}}{(\alpha/2)\Gamma(\alpha/2)\Gamma(1-(\alpha/2))}
\lim_{\delta\to0}\frac{1}{\delta} \int_{r-\delta}^r{\rm d}y^{(1)} (r-y^{(1)})^{-\alpha/2}(y^{(1)}-r+\delta)^{\alpha/2}\notag\\
&=\frac{r^{-\alpha/2}}{(\alpha/2)\Gamma(\alpha/2)\Gamma(1-(\alpha/2))}
\lim_{\delta\to0}\frac{1}{\delta} \int_0^\delta v^{-\alpha/2} (\delta-v)^{\alpha/2}\dd v \notag\\
&=\frac{r^{-\alpha/2}}{ \Gamma(1+(\alpha/2))\Gamma(1-(\alpha/2))}
 \int_0^1  w^{-\alpha/2} (1- w)^{\alpha/2}\dd w\notag\\
 &=r^{-\alpha/2}.
 \label{lastintegral}
\end{align}
where the first equality uses a calculation similar to \eqref{Delta1limit}.

Putting \eqref{lastintegral} back into \eqref{lastlimit}, we have from \eqref{deltadef} that 
\begin{align}
&\mathbb{E}_x\Big[f(X^{(1)}_{G(\tau_0^{\vee})},X^{(2:d)}_{G(\tau_0^{\vee})})\Big]\notag\\
&=\int_0^{x^{(1)}}\lim_{\delta\to0}\Delta_r^{\delta}f(x)\dd r   \notag\\
&=C_{\alpha,d}\int_0^{x^{(1)}}\dd r\int_{\mathbb{R}^{d-1}}\dd y^{(2:d)}\frac{|r-x^{(1)}|^{\alpha/2}f(r,y^{(2:d)})}{r^{\alpha/2}|(x^{(1)}-r)^2+(x^{(2:d)}-y^{(2:d)})|^{d/2}} 
\end{align}
which completes the proof of Theorem \ref{thm:pcr}.
\end{proof}

\section{Proof of Theorem \ref{thm:triplelaw}}\label{prf1sect}
There are two fromulae we will need to first derive in order to prove Theorem \ref{thm:triplelaw}. For the first, 
just as we have constructed the directional excursions from the minimum in the first axis, we can construct directional excursions from the maximum in the first axis resulting in an ascending ladder process, say $(H^{(1)+}, H^{(2:d)+})$. 
If we define the renewal function of the ascending ladder process of the latter, say  $U^+_x(\dd y)$,  in the obvious way (cf. \eqref{potential-d}), then it is immediate from Theorem \ref{thm:pcr}, \eqref{potential} and symmetry that 
    
\begin{equation}\label{ascpot}
    U_x^{+}({\rm d}z)=\frac{\Gamma(d/2)}{\pi^{d/2}\Gamma(\alpha/2)}\frac{(z^{(1)}-x^{(1)})^{\alpha/2}}{|x-z|^d}{\rm d}z \quad \text{ for } x^{(1)}<z^{(1)}, z^{(2:d)}\in\mathbb{R}^{d-1}.
\end{equation}

The second formula we need is provided by the  following lemma, which  gives us an  occupation functional under the excursion measure.

\begin{lemma}\label{lem:silverstein}
For $x\in \mathbb{R}^d$ and continuous $g:\mathbb{R}^d\to \mathbb{R}$ whose support is compactly embedded in $\mathbb{H}_{r\uparrow}$ 

\begin{align}
\mathbb{N}_{x^{(2:d)}}\left(\int_0^{\zeta}g(x^{(1)}+\epsilon^{(1)}(u),\epsilon^{(2:d)}(u)){\rm d}u \right) & =\int_{\{ x^{(1)}<z^{(1)}\}}g(z)\frac{\Gamma(d/2)}{\pi^{d/2}\Gamma(\alpha/2)}\frac{(z^{(1)}-x^{(1)})^{\alpha/2}}{|x-z|^d} {\rm d}z,\notag\\
& = \int_{\{ x^{(1)}<z^{(1)}\}} g(z)  U_x^+({\rm d}z).
\label{excursionduality}
\end{align}
\end{lemma}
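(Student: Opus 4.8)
The plan is to recognise \eqref{excursionduality} as a Silverstein-type occupation identity: the expected occupation measure of one excursion of $X$ above its running directional infimum coincides with the potential (renewal) measure $U^{+}_x$ of the ascending ladder process $(H^{(1)+},H^{(2:d)+})$. The second displayed equality in \eqref{excursionduality} is nothing but \eqref{ascpot}, so all the content lies in the first one, namely
\[
\mathbb{N}_{x^{(2:d)}}\Bigl(\textstyle\int_0^{\zeta}g(x^{(1)}+\epsilon^{(1)}(u),\epsilon^{(2:d)}(u))\,\dd u\Bigr)=\int_{\{z^{(1)}>x^{(1)}\}}g(z)\,U_x^{+}(\dd z).
\]
Write $\Phi_g(a,\theta):=\mathbb{N}_{\theta}\bigl(\int_0^{\zeta}g(a+\epsilon^{(1)}(u),\epsilon^{(2:d)}(u))\,\dd u\bigr)$ for $a\in\RR$, $\theta\in\RR^{d-1}$. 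By translation invariance of $X$ in the last $d-1$ coordinates, $\Phi_g(a,\theta)=\Phi_{g(\cdot,\,\cdot+\theta)}(a,0)$, and by the Markov property under $\mathbb{N}_{\theta}$ (recall that $(\epsilon^{(1)},\epsilon^{(2:d)})$ evolves like $X$ killed on entering $(-\infty,0]\times\RR^{d-1}$, with the excursion leaving the hyperplane continuously so that no occupation is created there) $\Phi_g$ is finite and continuous once $g$ is compactly supported inside $\mathbb{H}_{r\uparrow}$.

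Step one is to feed the occupation functional into the exit-system formula \eqref{generalexcursioncomp}, with a predictable weight localising to those excursions rooted strictly before the first passage time $\tau_b^{\vee}=\inf\{t>0:X^{(1)}_t<b\}$, for any level $b$ strictly below the support of $g$. Taking $f$ in \eqref{generalexcursioncomp} to be $\int_0^{\zeta}g(X^{(1)}_u,X^{(2:d)}_u)\,\dd u$ and using that on the excursion rooted at $\mathtt g$ one has $X^{(1)}=X^{(1)}_{\mathtt g}+\epsilon^{(1)}$, the excursion factor is $\mathtt N_{X_{\mathtt g}}(f)=\Phi_g(X^{(1)}_{\mathtt g},X^{(2:d)}_{\mathtt g})$. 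On the left one uses that $\bigcup_{\mathtt g\in\mathtt G,\,\mathtt g<\tau_b^{\vee}}(\mathtt g,\mathtt d_{\mathtt g})$ exhausts $[0,\tau_b^{\vee})$ up to the Lebesgue-null set $\overline M$, so the sum collapses to $\mathbb{E}_x[\int_0^{\tau_b^{\vee}}g(X_u)\,\dd u]$; on the right one uses that $L$ grows only on $\{X^{(1)}=\underline X^{(1)}\}$ and the time change $s=L^{-1}_t$, obtaining $\mathbb{E}_x[\int_0^{\rho_b}\Phi_g(H^{(1)-}_t,H^{(2:d)-}_t)\,\dd t]$ with $\rho_b=\inf\{t:H^{(1)-}_t\le b\}$. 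Since $H^{(1)-}$ is minus an $\alpha/2$-stable subordinator and therefore jumps strictly over $b$, killing the ladder process at $\rho_b$ merely restricts its potential measure, yielding
\[
\mathbb{E}_x\Bigl[\textstyle\int_0^{\tau_b^{\vee}}g(X_u)\,\dd u\Bigr]=\int_{\{z^{(1)}>b\}}\Phi_g(z^{(1)},z^{(2:d)})\,U^{-}_x(\dd z),
\]
for every $x$ with $x^{(1)}>b$.

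Step two is to invert this relation. The left-hand side is the Green measure of $X$ killed on exiting $\mathbb{H}_{b\uparrow}$, which is explicit: by the remark following Lemma \ref{bgrfe}, or by combining Lemma \ref{cor:marginal} with the jump density \eqref{Pi} (the overshoot law from $\mathbb{H}_{b\uparrow}$ being this Green measure integrated against $\Pi$), it is a constant multiple of $|x-w|^{\alpha-d}\int_0^{\zeta_b^{\vee}(x,w)}(u+1)^{-d/2}u^{\alpha/2-1}\,\dd u$. The measure $U^{-}_x$ is explicit from Theorem \ref{thm:pcr} together with \eqref{potential}, namely $U^{-}_x(\dd z)=\tfrac{\Gamma(d/2)}{\pi^{d/2}\Gamma(\alpha/2)}|z^{(1)}-x^{(1)}|^{\alpha/2}|x-z|^{-d}\,\dd z$ for $z^{(1)}<x^{(1)}$, which disintegrates as the $\alpha/2$-stable ladder potential $\tfrac{1}{\Gamma(\alpha/2)}(x^{(1)}-z^{(1)})^{\alpha/2-1}\dd z^{(1)}$ in the first coordinate against a $\mathtt{Cauchy}_{d-1}(x^{(2:d)},|z^{(1)}-x^{(1)}|)$ kernel in the remaining ones. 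Stripping the $(x^{(1)}-z^{(1)})^{\alpha/2-1}$ weight by a Riemann--Liouville fractional derivative of order $\alpha/2$ in the variable $x^{(1)}$, and then the Cauchy kernel by the $(d-1)$-dimensional Cauchy convolution semigroup, one reads off $\Phi_g(x^{(1)},x^{(2:d)})=\int_{\{z^{(1)}>x^{(1)}\}}g(z)\,U^{+}_x(\dd z)$, and substituting \eqref{ascpot} gives the first expression in \eqref{excursionduality}.

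I expect the inversion together with the localisation bookkeeping to be the main obstacle: one must justify the predictable localisation at $\tau_b^{\vee}$ (in particular that the excursion straddling $\tau_b^{\vee}$ is the one rooted at $G(\tau_b^{\vee})$ and is counted in full, while the one rooted at $\tau_b^{\vee}$ is not), control the $\mathbb{N}$-entrance law at the hyperplane, and carry out the two de-convolutions rigorously; continuity and integrability of $\Phi_g$ under the support hypothesis on $g$ are what legitimise the latter. An alternative that avoids the explicit half-space Green function altogether is to deduce \eqref{excursionduality} directly from \eqref{ascpot} by invoking the classical Silverstein identity, i.e. the Wiener--Hopf factorisation for the bivariate ascending ladder process $(H^{(1)+},H^{(2:d)+})$.
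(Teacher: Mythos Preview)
Your Step 1 coincides with the paper's opening move: the exit formula applied to $\int_0^{\tau_r^{\vee}} g(X_s)\,\dd s$ yields
\[
\mathbb{E}_x\Bigl[\int_0^{\tau_r^{\vee}} g(X_s)\,\dd s\Bigr]
=\int_{\{r<z^{(1)}<x^{(1)}\}}\Phi_g(z)\,U^{-}_x(\dd z)
=\int_{\{r<z^{(1)}<x^{(1)}\}}(z^{(1)})^{\alpha/2}\Phi_g(z)\,\tilde U^{-}_x(\dd z),
\]
where $\tilde U^{-}_x(\dd z)=(z^{(1)})^{-\alpha/2}U^{-}_x(\dd z)$ is, up to a constant, the law of $X_{G(\tau_0^{\vee})}$ under $\mathbb{P}_x$.

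The divergence is in the inversion. The paper does not attempt your two-stage deconvolution. Instead it sends $r\uparrow x^{(1)}$ and recycles the localisation argument \eqref{first claim} from the proof of Theorem~\ref{thm:pcr}: since $\kappa(z):=(z^{(1)})^{\alpha/2}\Phi_g(z)$ is bounded and continuous (precisely your observation about $\Phi_g$; the compact support of $g$ provides the bound), the ratio
\[
\frac{\mathbb{E}_x\bigl[\int_0^{\tau_r^{\vee}} g(X_s)\,\dd s\bigr]}{\int_{\{r<z^{(1)}<x^{(1)}\}}\tilde U^{-}_x(\dd z)}
\]
converges to $\kappa(x)$. The numerator is evaluated via the explicit half-space Green density $h_r^{\vee}(x,\cdot)$ extracted from Lemma~\ref{cor:marginal}, the denominator via the one-dimensional ladder potential $U^{(1)-}_{x^{(1)}}$, and the limit of the quotient is computed by L'H\^opital in $r$. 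In effect the paper performs your fractional differentiation and your Cauchy inversion in a single stroke: as the strip $\{r<z^{(1)}<x^{(1)}\}$ collapses, the normalised $\tilde U^{-}_x$-average localises to the value at $x$ by continuity, and both kernels disappear simultaneously.

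Your proposed sequential inversion (Riemann--Liouville in $x^{(1)}$, then Cauchy semigroup in $x^{(2:d)}$) has a coupling problem as written: the Cauchy scale is $|x^{(1)}-z^{(1)}|$, the very Abel variable you are differentiating in, so the first-coordinate integral is not a pure Abel transform of a fixed function and the fractional derivative does not leave behind a clean Cauchy convolution to undo afterwards. This is repairable (Fourier-transform in $x^{(2:d)}$ first, turning the Cauchy factor into $e^{-|x^{(1)}-z^{(1)}||\xi|}$ and reducing to a one-variable equation), but the paper's ratio-limit is shorter and reuses machinery already built.
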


\begin{proof}
 By the last exit formula \eqref{eq:exitformula:levy} we note that 
\begin{align} \nonumber
&  \mathbb{E}_x\left[\int_0^{\tau_r^{\vee}}g(X_s){\rm d}s \right] \notag\\
&=\mathbb{E}_{x}\left[ \sum_{g\in \mathtt{G}}  \mathbf{1}_{\{\underline{X}_g^{(1)}>r\}}\int_g^d g( \underline{X}_g^{(1)} + \epsilon^{(1)}_g(s),\epsilon^{(2:d)}_g(s) ) \dd s  \right]\notag\\
&= \mathbb{E}_x\left[\int_0^{\infty} \mathbf{1}_{\{\underline{X}_s^{(1)}>r\}}
\mathbb{N}_{X^{(2:d)}_s}\left(\int_0^\zeta g (y + \epsilon^{(1)}(u), \epsilon^{(2:d)}(u) )\dd u\right)\Big|_{y = X^{(1)}_s}
{\rm d} L_s \right]\notag\\
&= \mathbb{E}_x\left[\int_0^{\infty} \mathbf{1}_{\{H^{(1)-}_s>r\}}
\mathbb{N}_{H^{(2:d)-}_s}\left(\int_0^\zeta g (y + \epsilon^{(1)}(u), \epsilon^{(2:d)}(u) )\dd u\right)\Big|_{y = H^{(1)-}_s}
{\rm d}s \right]\notag\\
    &=\int_{\{r<z^{(1)}<x^{(1)}\}}U_x^-({\rm d}z)\mathbb{N}_{z^{(2:d)}}\left(\int_0^{\zeta}g(z^{(1)}+\epsilon^{(1)}(u),\epsilon^{(2:d)}(u)){\rm d}u \right) \nonumber\\  
         &=\int_{\{r<z^{(1)}<x^{(1)}\}}\tilde{U}_x^-({\rm d}z)(z^{(1)})^{\alpha/2}\mathbb{N}_{z^{(2:d)}}\left(\int_0^{\zeta}g(z^{(1)}+\epsilon^{(1)}(u),\epsilon^{(2:d)}(u)){\rm d}u \right).
         \label{ocupfunc}
\end{align}

Next, we want to make use of an argument very similar to \eqref{first claim} replacing the role of $f$ there by the function 
\[
\kappa(z): =(z^{(1)})^{\alpha/2}  \mathbb{N}_{z^{(2:d)}}\left(\int_0^{\zeta}g(z^{(1)}+\epsilon(u),\epsilon^{(2:d)}(u))\right).
\]
Note that we cannot use \eqref{first claim} directly as the function $f$ there cannot take the form of $\kappa$. Nonetheless, the result in \eqref{first claim} still holds when we make the aforesaid exchange of functions, providing we can demonstrate that $\kappa(z)$ is bounded and continuous in $z$.

\smallskip

Boundedness of $\kappa$ is ensured by the compact support property of $g$. Continuity of $\kappa$ is ensured by dominated convergence, using the compact embedding of the support  of $g$, the boundedness and continuity of $g$ and the fact that 
\[
\mathbb{N}_{z^{(2:d)}}\left(\int_0^{\zeta}g(z^{(1)}+\epsilon(u),\epsilon^{(2:d)}(u))\right)
=\mathbb{N}_{0}\left(\int_0^{\zeta}g(z^{(1)}+\epsilon(u),z^{(2:d)}+\epsilon^{(2:d)}(u))\right).
\]

Hence using $\kappa$ in place of $f$ in \eqref{first claim} we have on the one hand that 
\begin{align}
(x^{(1)})^{\alpha/2}\mathbb{N}_{x^{(2:d)}}&\left(\int_0^{\zeta}g(x^{(1)}+\epsilon(u),\epsilon^{(2:d)}(u)){\rm d}u \right)=\lim_{r \uparrow x^{(1)}}\frac{\mathbb{E}_x\left[ \int_0^{\tau_r^{\vee}}g(X_s){\rm d}s \right]}{\int_{\{r<z^{(1)}<x^{(1)}\}}\tilde{U}_x^-({\rm d}z)}.
\label{onehand}
\end{align}

Although we have not calculated an expression for the numerator on the right-hand side of \eqref{onehand}, it is easily derived from Lemma \ref{cor:marginal}.  Indeed, it is easy to derive from the underlying L\'evy system of $X$ that,  for $x,y \in \mathbb{H}_{r\uparrow}$ and $z \in \mathbb{H}_{r\downarrow}$,
 \begin{align*}
 \Xi_x(\bullet,y,z)\dd y\dd z =  \mathbb{E}_x\left[ \int_0^{\tau_r^{\vee}}\mathbf{1}_{\{X_s\in \dd y\}}{\rm d}s \right]\Pi(\dd z-y)
 \end{align*}
 As such, 
 we have,  for $x,y \in \mathbb{H}_{r\uparrow}$ and $z \in \mathbb{H}_{r\downarrow}$,
 \begin{align}
 h_r^{\vee}(x,y)&:=\mathbb{E}_x\left[ \int_0^{\tau_r^{\vee}}\mathbf{1}_{\{X_s\in \dd y\}}{\rm d}s \right] \notag\\
 &= E_{\alpha,d}
  |x-y|^{\alpha-d}\int_0^{\zeta_r^{\vee}(x,y)}(u+1)^{-d/2}u^{\alpha/2-1}{\rm d}u,  
 \end{align}
 where $E_{\alpha,d} = \Gamma(d/2)/2^{\alpha}  \pi^{d/2}\Gamma(\alpha/2)^2  $ and we recall 
 \[
 \zeta_r^{\vee}(x,y):=\frac{4(x^{(1)}-r)(y^{(1)}-r)}{|x-y|^2}.
 \]

Therefore,  on the other hand we have that 
\begin{align}
&\lim_{r \uparrow x^{(1)}}\frac{\mathbb{E}_x\left[ \int_0^{\tau_r^{\vee}}g(X_s){\rm d}s \right]}{\int_{\{r<z^{(1)}<x^{(1)}\}}\tilde{U}_x^-({\rm d}z)}\notag\\
&=\lim_{r \uparrow x^{(1)}}\frac{\int_{\{x^{(1)}<z^{(1)}\}} \mathbf{1}_{\{r<z^{(1)}\}}g(z)h_r^{\vee}(x,z) {\rm d}z}{\int_{\{r<z^{(1)}<x^{(1)}\}} (z^{(1)})^{-\alpha/2}U_x^{(1)-}({\rm d}z^{(1)})}\notag\\
&=\lim_{r \uparrow x^{(1)}}\frac{\int_{\{x^{(1)}<z^{(1)}\}} \mathbf{1}_{\{r<z^{(1)}\}}g(z)E_{\alpha,d}|x-z|^{\alpha-d}\int_{0}^{\zeta_r^{\vee}(x,z)}(u+1)^{-d/2}u^{\alpha/2-1}{\rm d}u{\rm d}z}{\Gamma(\alpha/2)^{-1}\int_r^{x^{(1)}}u^{-\alpha/2}(x^{(1)}-u)^{\alpha/2-1}{\rm d}u}\notag\\
&=E_{\alpha,d}\Gamma(\alpha/2)\int_{\{x^{(1)}<z^{(1)}\}}g(z)|x-z|^{\alpha-d}\lim_{r\uparrow x^{(1)}}\frac{\int_{0}^{\zeta_r^{\vee}(x,z)}(u+1)^{-d/2}u^{\alpha/2-1}{\rm d}u}{\int_r^{x^{(1)}}u^{-\alpha/2}(x^{(1)}-u)^{\alpha/2-1}{\rm d}u}{\rm d}z\notag\\
&=E_{\alpha,d}\Gamma(\alpha/2)(x^{(1)})^{\alpha/2}\int_{\{x^{(1)}<z^{(1)}\}}g(z)|x-z|^{\alpha-d}\left(\frac{4(z^{(1)}-x^{(1)})}{|x-z|^2} \right)^{\alpha/2}  {\rm d}z\notag\\
&=2^\alpha E_{\alpha,d}\Gamma(\alpha/2)(x^{(1)})^{\alpha/2}\int_{\{x^{(1)}<z^{(1)}\}}g(z)\frac{(z^{(1)}-x^{(1)})^{\alpha/2}}{|x-z|^d}  {\rm d}z.
\label{otherhand}
\end{align}
where we have used \eqref{stablesubpot} and, in the last equality, we used L'H\^ospital rule.
Equating \eqref{onehand} with \eqref{otherhand} and noting that 
\[
2^\alpha E_{\alpha,d}\Gamma(\alpha/2) = \frac{\Gamma(d/2)}{ \pi^{d/2}\Gamma(\alpha/2)}
\]
completes the proof.
\end{proof}

\begin{remark}\rm We can remove the assumption that $g$ is compactly supported in the previous Lemma, leaving only the requirement that $g$ is continuous and bounded by appealing to a sequence of increasing functions $g_n\uparrow g$ and monotone convergence across the equality \eqref{excursionduality} applied to the sequence $g_n$.
\end{remark}
\begin{proof}[Proof of Theorem \ref{thm:triplelaw}]
Using the fact that   $X^{(1)}$ does not creep downwards, from the underlying L\'evy system  we get that 
\begin{align}
&\mathbb{E}_{w}\left[f(X_{G(\tau_r^{\vee})})g(X_{\tau_r^{\vee}-})h(X_{\tau_r^{\vee}}) \right] = \mathbb{E}_w\left[\int_0^{\tau_r^{\vee}}f(X_{G(t)})g(X_t)k(X_t){\rm d}t \right],
\label{levysystem}
\end{align}
where $k(y)=\int_{\{y^{(1)}+u^{(1)}<r\}}\Pi({\rm d}u)h(y+u)$.\\

Putting together identity \eqref{levysystem}  with Lemma \ref{lem:silverstein}, and rearranging the integrals we obtain

\begin{align}
&\mathbb{E}_w\left[\int_0^{\tau_r^{\vee}}f(X_{G(t)})g(X_t)k(X_t){\rm d}t \right]\notag\\
    &\quad =\int_{\{r<x^{(1)}<w^{(1)}\}}f(x)U_{w}^-({\rm d}x)  \mathbb{N}_{x^{(2:d)}}^-\left( \int_0^{\infty} (g\cdot k)(x^{(1)}+\epsilon^{(1)}(s),\epsilon^{(2:d)}(s))\dd s\right)\notag\\
    &\quad =\int_{\{r<x^{(1)}<w^{(1)}\}}f(x)U_{w}^-({\rm d}x)  \int _{\{x^{(1)}<y^{(1)}\}}g(y) U_x^+({\rm d}y)\int_{\{y^{(1)}+u^{(1)}<r\}}\Pi({\rm d}u)h(y+u)\label{Tanaka}\\
    &  \quad = \int_{\{r<x^{(1)}<w^{(1)}\}}\int_{\{x^{(1)}<y^{(1)}\}}\int_{\{z^{(1)}<r\}}f(x)g(y)h(z)U_{w}^-({\rm d}x)U_{x}^+({\rm d}y)K_{\alpha,d}|z-y|^{-(\alpha+d)}. \notag
\end{align}
where $K_{\alpha,d}=2^{\alpha}\pi^{-d/2}\Gamma((d+\alpha)/2)/|\Gamma(-\alpha/2)|$ and we have used \eqref{Pi}, from which the result follows.
\end{proof}

\begin{remark}\rm
The equality given by \eqref{Tanaka} agrees with a general formula given by \cite{tamura2008formula}, albeit in the latter, the factors where not clearly introduced as the potential measures of the ascending and descending ladder processes. 
\end{remark}
\section{Concluding remarks}

Many of the arguments we have given here are adapted from \cite{DEEP3}, which took a perspective on the stable process via the Lamperti--Kiu transform. Many of the calculations in \cite{DEEP3} demonstrate that the nature of the underlying  Markov Additive Process (MAP) captures the behaviour of the stable process via a generalised spherical polar coordinate system. The reason why we have been able to secure similarly explicit identities in relation to the hyperplane in this paper is because we have decomposed the stable process in an orthogonal coordinate system, that is, $X = (X^{(1)}, X^{(2:d)})$.
Moreover,  the pair $(X^{(1)}, X^{(2:d)})$ may  also be considered as a MAP and, although there is no time change involved as with the Lamperti--Kiu transform, the underlying principles of MAP fluctuation theory is still what drives our proofs to conclusion here, just as in \cite{DEEP3}. 
\smallskip

The assimilation of our `orthogonal' path decomposition in this article with what we know for `spherical polar' path decompositions of the stable process goes further than what we have have demonstrated in the previous sections. Below are but two examples of what more can be done. We leave them as exercises to the reader, with full proofs available in \cite{PhD}.

\subsection*{Orthogonal Wiener--Hopf factorisation of the stable process}

The Wiener--Hopf factorisation for one-dimensional L\'evy processes has undisputedly played a crucial role in understanding its fluctuations. The factorisation offers an actual decomposition of the L\'evy--Khintchine formula into two factors which correspond to the exponents of the ascending and descending ladder processes. In essence the factorisation is the result of splitting the path of the L\'evy process over an independent exponentially distributed time at its minimum  (or maximum) and applying duality to the post minimum (or  maximum). 

\smallskip

In higher dimensions it is less clear what such a factorisation should look like, however with an appropriate coordinate system, such as spherical polar or orthogonal coordinates one may still seek to split the path at a minimum (or maximum) in one of the coordinate directions.

Let us introduce the following potential operators, for $f$ a continuous function with compact support on $\mathbb{R}^{d-1}$, $\theta \in \mathbb{R}$, and $x^{(2:d)} \in \mathbb{R}^{d-1}$
$$\mathbf{R}_{\theta}[f](x):=\mathbb{E}_{0,x^{(2:d)}}\left[\int_0^{\infty}\ee ^{{\rm i} \theta X_t^{(1)}}f(X_t^{(2:d)}){\rm d}t \right].$$ 

Now define the potentials of the descending and ascending ladder processes, $(H^{(1)-},H^{(2:d)-})$ and  $(H^{(1)+},H^{(2:d)+})$ respectively (see the discussion at the start of Section \ref{prf1sect} for the latter), by
$$\hat{\boldsymbol{\rho}}_{\theta}[f](x):=\mathbb{E}_{0,x^{(2:d)}}\left[\int_0^{\infty}\ee^{{\rm i} \theta H_t^{(1)-}}f(H_t^{(2:d)-}){\rm d}t \right],$$
and
$$\boldsymbol {\rho}_{\theta}[f](x):=\mathbb{E}_{0,x^{(2:d)}}\left[\int_0^{\infty}\ee^{{\rm i} \theta H_t^{(1)+}}f(H_t^{(2:d)+}){\rm d}t \right].$$

The density of the potential of a $d$-dimensional stable process is a well known object (cf. Chapter 3 of \cite{KJCbook}), and allows us to write the operator $\mathbf{R}_{\theta}[f](x)$ as follows,

$$\mathbf{R}_{\theta}[f](x)=2^{-\alpha}\pi^{-d/2}\frac{\Gamma((d-\alpha)/2)}{\Gamma(\alpha/2)}\int_{\mathbb{R}^d}\ee^{{\rm i} \theta y^{(1)}}f(y^{(2:d)})|x-y|^{\alpha-d}{\rm d}y.$$

\begin{theorem}[Orthogonal  Wiener-Hopf  factorisation]
For a continuous compactly supported function $f:\mathbb{R}^{d-1}\to \mathbb{R}$, $\theta\in \mathbb{R}$ and $x=(0,x^{(2:d)}), \, x^{(2:d)} \in \mathbb{R}^{d-1}$ the potential operator $\mathbf{R}$ corresponding to the isotropic $\alpha$-stable  can be represented as the composition of ascending and descending potential operators as follows

\begin{equation}\label{deepfact} 
\mathbf{R}_{\theta}[f](x)=\hat{\boldsymbol{\rho}}_{\theta}\left[\boldsymbol{\rho}_{\theta}[f]\right](x).
\end{equation} 
Moreover, we have
$$\boldsymbol{\rho}_{\theta}[f](x)=\int_{\{y^{(1)}>0\}}\ee^{{\rm i} \theta y^{(1)}}f(y^{(2:d)})\frac{\Gamma(d/2)}{\pi^{d/2}\Gamma(\alpha/2)}\frac{(y^{(1)})^{\alpha/2}}{|x-y|^d}\dd y$$ and
$$\hat{\boldsymbol{\rho}}_{\theta}[f](x)=\boldsymbol{\rho}_{-\theta}[f](x).$$
\end{theorem}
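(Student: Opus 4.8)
The plan is to obtain both the factorisation \eqref{deepfact} and the two explicit kernels from the directional excursion theory developed earlier, handling the operator $\mathbf R_\theta$ by the same last-exit-formula computation that produced Lemma~\ref{lem:silverstein}. I would work with $\theta\neq0$ throughout, so that the oscillatory integrals defining $\mathbf R_\theta$, $\boldsymbol\rho_\theta$ and $\hat{\boldsymbol\rho}_\theta$ make sense; when $d>\alpha+1$ the case $\theta=0$ is covered by the same argument with no extra care.

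First I would record the two kernels. By their definitions $\boldsymbol\rho_\theta[f](x)$ is the integral of $z\mapsto\ee^{{\rm i}\theta z^{(1)}}f(z^{(2:d)})$ against the ascending ladder potential $U^+_{(0,x^{(2:d)})}(\dd z)$, and $\hat{\boldsymbol\rho}_\theta[f](x)$ the same integral against the descending ladder potential $U^-_{(0,x^{(2:d)})}(\dd z)$. The density of $U^+$ is \eqref{ascpot}; that of $U^-$ is its reflection in the first coordinate, which I would read off from \eqref{potential} and Theorem~\ref{thm:pcr} together with the reflection symmetry of $X^{(1)}$, tidying the constant with $\sin(\alpha\pi/2)=\pi/(\Gamma(\alpha/2)\Gamma(1-\alpha/2))$ to $\tfrac{\Gamma(d/2)}{\pi^{d/2}\Gamma(\alpha/2)}$. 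Substituting gives the two displayed formulas at once, and $\hat{\boldsymbol\rho}_\theta=\boldsymbol\rho_{-\theta}$ follows by the change of variables $z^{(1)}\mapsto-z^{(1)}$. I would also note here that these ladder-potential integrals converge absolutely when $f$ is compactly supported: the kernel behaves like $(z^{(1)})^{\alpha/2}\bigl((z^{(1)})^2+|w|^2\bigr)^{-d/2}$ near $z^{(1)}=0$, which is integrable since $\alpha>0$ and $d\geq2$, and like $(z^{(1)})^{\alpha/2-d}$ for large $z^{(1)}$, which is integrable since $\alpha<2\leq2(d-1)$; in particular $\boldsymbol\rho_\theta[f]$ is a bounded continuous function of $x^{(2:d)}$.

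For the factorisation I would start from $\mathbf R_\theta[f](x)=\mathbb E_{0,x^{(2:d)}}\bigl[\int_0^\infty\ee^{{\rm i}\theta X^{(1)}_t}f(X^{(2:d)}_t)\,\dd t\bigr]$ and argue as in the proof of Lemma~\ref{lem:silverstein}. Since the set of times at which $X^{(1)}$ sits at its running minimum is Lebesgue-null (exactly as used there), the occupation integral splits as $\sum_{g\in\mathtt G}\int_g^{\mathtt d_g}\ee^{{\rm i}\theta X^{(1)}_s}f(X^{(2:d)}_s)\,\dd s$, and on $(g,\mathtt d_g)$ one has $X^{(1)}_s=X^{(1)}_g+\epsilon^{(1)}_g(s-g)$ and $X^{(2:d)}_s=\epsilon^{(2:d)}_g(s-g)$. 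The last exit formula \eqref{eq:exitformula:levy} with $F\equiv1$, carrying $X^{(1)}_t$ along as a parameter just as in Lemma~\ref{lem:silverstein}, gives
\[
\mathbf R_\theta[f](x)=\mathbb E_{0,x^{(2:d)}}\left[\int_0^\infty\mathbb N_{X^{(2:d)}_t}\Bigl(\int_0^\zeta\ee^{{\rm i}\theta(y+\epsilon^{(1)}(u))}f(\epsilon^{(2:d)}(u))\,\dd u\Bigr)\Big|_{y=X^{(1)}_t}\dd L_t\right].
\]
By Lemma~\ref{lem:silverstein}, extended from compactly supported $g$ to the bounded continuous integrand $g(z)=\ee^{{\rm i}\theta z^{(1)}}f(z^{(2:d)})$ (apply the monotone-convergence remark following it to $|g|$, then split $g$ into real and imaginary, positive and negative parts; the limiting step is legitimate by the absolute convergence just recorded), the inner quantity equals $\ee^{{\rm i}\theta y}\boldsymbol\rho_\theta[f](X^{(2:d)}_t)\big|_{y=X^{(1)}_t}$. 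As $\dd L_t$ is carried by $\{R^{(1)}=0\}$, on its support $X^{(1)}_t=H^{(1)-}_{L_t}$ and $X^{(2:d)}_t=H^{(2:d)-}_{L_t}$, so the time change $t\mapsto L_t$ turns the right-hand side into $\mathbb E_{0,x^{(2:d)}}\bigl[\int_0^\infty\ee^{{\rm i}\theta H^{(1)-}_t}\boldsymbol\rho_\theta[f](H^{(2:d)-}_t)\,\dd t\bigr]=\hat{\boldsymbol\rho}_\theta[\boldsymbol\rho_\theta[f]](x)$, which is \eqref{deepfact}.

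The main obstacle is integrability, not structure: when $d=2$ and $\alpha\geq1$ the occupation time $\int_0^\infty|f(X^{(2:d)}_t)|\,\dd t$ has infinite mean, so none of the interchanges above is justified by Tonelli and one must exploit the oscillation in $\theta$. I would deal with this by scaling: writing $p_t$ for the transition density and changing variables, $\mathbb E[\ee^{{\rm i}\theta X^{(1)}_t}f(X^{(2:d)}_t)]$ is controlled by the partial Fourier transform of the stable density $p_1$ in the first coordinate at $\theta t^{1/\alpha}$, and since $|\widehat{p_1}(\xi,\eta)|=\ee^{-(\xi^2+|\eta|^2)^{\alpha/2}}\leq\ee^{-|\xi|^\alpha}$ this partial transform is $\leq C\ee^{-c|\xi|^\alpha}$ uniformly in the remaining coordinates; hence $t\mapsto\mathbb E[\ee^{{\rm i}\theta X^{(1)}_t}f(X^{(2:d)}_t)]$ decays like $\ee^{-c|\theta|^\alpha t}$, so $\mathbf R_\theta[f](x)$ exists as $\lim_{T\to\infty}\mathbb E[\int_0^T\cdots\dd t]$, and one runs the excursion computation under this truncation and passes to the limit. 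Since the kernels are already in hand one could instead bypass the probabilistic argument and verify \eqref{deepfact} analytically — that convolving the two half-space kernels reproduces the stable potential kernel $c\,|x-y|^{\alpha-d}$, a routine but tedious identity for Bessel-$K$ functions — but I would keep the excursion proof as primary and relegate this to a remark.
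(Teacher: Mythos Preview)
The paper does not actually prove this theorem: it appears in the concluding remarks as one of two results explicitly ``left as exercises to the reader, with full proofs available in \cite{PhD}''. So there is no in-paper argument to compare against; your proposal is filling in that exercise, and it does so by the route the paper is clearly pointing to.

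Your plan is sound. The identification of $\boldsymbol\rho_\theta$ and $\hat{\boldsymbol\rho}_\theta$ with integrals against $U^+$ and $U^-$ is immediate from their definitions together with \eqref{ascpot} and the reflection symmetry; the constant check via $\sin(\alpha\pi/2)=\pi/\Gamma(\alpha/2)\Gamma(1-\alpha/2)$ is exactly right. For the factorisation, decomposing the occupation integral over excursions from the directional minimum, applying \eqref{eq:exitformula:levy} with $F\equiv1$, invoking Lemma~\ref{lem:silverstein} (plus translation invariance in the first coordinate to pull out the factor $\ee^{{\rm i}\theta X^{(1)}_t}$), and then time-changing $\dd L_t\to\dd t$ to land on $\hat{\boldsymbol\rho}_\theta[\boldsymbol\rho_\theta[f]]$ is precisely the computation that the paper's machinery is set up to deliver.

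Two small points to tighten. First, your uniform bound $|\widehat{p_1}(\xi,\eta)|\leq\ee^{-|\xi|^\alpha}$ does not directly give $\sup_{y^{(2:d)}}|q_1(\xi,y^{(2:d)})|\leq C\ee^{-c|\xi|^\alpha}$, because $(\xi^2+|\eta|^2)^{\alpha/2}\leq|\xi|^\alpha+|\eta|^\alpha$ for $\alpha\leq2$; split the $\eta$-integral over $\{|\eta|\leq|\xi|\}$ and $\{|\eta|>|\xi|\}$ to get a bound with a harmless polynomial prefactor, which is enough for the $\dd t$-integrability you need. Second, $\boldsymbol\rho_\theta[f]$ is bounded and continuous but not compactly supported, so the outer application of $\hat{\boldsymbol\rho}_\theta$ goes beyond the stated domain; your decay estimate (the kernel analysis gives $\boldsymbol\rho_\theta[f](0,z^{(2:d)})=O(|z^{(2:d)}|^{1+\alpha/2-d})$) is exactly what justifies this extension, and is worth stating explicitly.
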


\subsection*{Conditioning to hit plane continuously from one side}

The following results are an extension of those in \cite{tsogi}, where the law of the point of closest reach to the origin  is used to condition the isotropic $\alpha$-stable L\'evy process $(X,\mathbb{P})$ to continuously approach a patch  on the surface of the unit sphere from outside of the sphere. In the current setting, we can use the law of $X_{G(\tau_0^{\vee})}$ to condition $(X,\mathbb{P})$ to continuously approach a patch on the hyperplane $\mathbb{H}_0=\{x\in \mathbb{R}^d:x^{(1)}=0\}$ from one side.

\smallskip

Let us consider a Borel set $S\subseteq \mathbb{H}_0$ (which may be the whole of $\mathbb{H}_0$) such that it has strictly positive Lebesgue surface measure, written $\ell_{d-1}$, on $\mathbb{H}_0$ or it is a point. We aim to make sense of the law of $X$ conditioned to approach $S$ continuously from the half-space $\mathbb{H}_{0\uparrow}:=\{x\in \mathbb{R}^d:x^{(1)}>0\}$. Assume momentarily that  $S$ is not a point, but has strictly positive Lebesgue measure. Define for $\epsilon>0$ $S_{\epsilon}=(0,\epsilon)\times S$. Consider the event $C_\epsilon^{\vee}=\{X_{G(\tau_0^{\vee})}\in S_{\epsilon}\}$.
We are interested in the asymptotic conditioning
\begin{equation}
    \mathbb{P}_x^{\vee}(A,t<\zeta):=\lim_{\beta \to 0}\lim_{\epsilon \to 0}\mathbb{P}_x(A,t<\tau_{\beta}^{\vee}|C_{\epsilon}^{\vee}), \quad x\in \mathbb{H}_{0\uparrow},
\end{equation}
where $A\in \mathcal{F}_t = \sigma(X_u, u\leq t)$ and $\zeta$ is the lifetime of the process on $\mathbb{D}(\mathbb{R}^+\times \mathbb{R}^d)$, which we recall is the space of càdlàg paths from $\mathbb{R}^+$ to $\mathbb{R}^d$ with appended cemetery state, which is where the path is sent at its lifetime.
If $S=\{v\}\in \mathbb{H}_{0}$, the sets $S_{\epsilon}$ is adapted to converge to $S$ on $\mathbb{H}_{0\uparrow}$. For instance one could take $S_{\epsilon}=(0,\epsilon)\times \mathbb{B}^{(d-1)}(v,\epsilon)$ where $ \mathbb{B}^{(d-1)}(v,\epsilon)= \{y^{(2:d)}\in \mathbb{H}_0: |y^{(2:d)}-v|<\epsilon\}$. 
\smallskip

In order to state our main result let us define the following functions for $x\notin \mathbb{H}_0$,

\begin{equation} \label{hfunction}
  H_S(x)=\begin{cases} 
    \int_{ S}\big |x^{(1)}\big|^{\alpha/2}\bigg|\left(x^{(1)}\right)^2+|x^{(2:d)}-y^{(2:d)}|^2\bigg|^{-d/2} \ell_{d-1}(\dd y^{(2:d)}) & \text{ if } \ell_{d-1}(S) > 0,\\
   \big|x^{(1)}\big|^{\alpha/2}\bigg| \left(x^{(1)}\right)^2+|x^{(2:d)}-v^{(2:d)}|^2\bigg|^{-d/2} & \text{ if } S=\{v\}.
\end{cases}
\end{equation}
Note that $H_S$ is well defined when $S=\mathbb{H}_0$.

\begin{theorem}\label{thm:conditiononesided} Let $S\subseteq \mathbb{H}_0$ such that $\ell_{d-1}(S)>0$ or $S=\{v\}$ a point on $\mathbb{H}_0$. Then for all points of issue $x\notin \mathbb{H}_{0}$ we have as a change of measure on $\mathbb{D}(\mathbb{R}^+\times \mathbb{R}^d)$ such that, for all $A\in\mathcal{F}_t$,
\begin{equation}\label{radon1}
\mathbb{P}^\vee_x(A, \, t<\zeta) = \mathbb{E}_x\left[\mathbf{1}_{(A, \, t<\tau_0^{\vee})}\frac{H_S(X_t)}{H_S(x)}\right], \qquad  x\in \mathbb{H}_{0\uparrow},
\end{equation}
where  $H_S(x)$ is given in \eqref{hfunction}. 

\end{theorem}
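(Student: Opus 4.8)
The plan is to realise $\mathbb{P}^\vee_x$ as a Doob-type $h$-transform, obtained by the standard two-stage limiting procedure and fed by Theorem~\ref{thm:pcr}. Fix $x\in\mathbb{H}_{0\uparrow}$ and $A\in\mathcal{F}_t$ throughout; the case $x\in\mathbb{H}_{0\downarrow}$ follows by reflecting the first coordinate. \emph{Step 1 (asymptotics of the conditioning event).} From Theorem~\ref{thm:pcr}, for $z^{(1)}>0$,
\[
\mathbb{P}_z\big(X_{G(\tau_0^\vee)}\in S_\epsilon\big)=C_{\alpha,d}\int_S\int_0^\epsilon\frac{(z^{(1)}-y^{(1)})^{\alpha/2}}{(y^{(1)})^{\alpha/2}}\,|z-y|^{-d}\,\mathrm{d}y^{(1)}\,\ell_{d-1}(\mathrm{d}y^{(2:d)}).
\]
Since $\alpha/2<1$, the substitution $y^{(1)}=\epsilon s$ together with dominated convergence (the remaining integrand being bounded and continuous near $y^{(1)}=0$) gives $\mathbb{P}_z(C_\epsilon^\vee)=\kappa(\epsilon)(H_S(z)+o(1))$ as $\epsilon\to0$, with $\kappa(\epsilon)=C_{\alpha,d}\epsilon^{1-\alpha/2}/(1-\alpha/2)$ when $\ell_{d-1}(S)>0$ and an extra factor (the volume of the unit ball of $\mathbb{R}^{d-1}$ times $\epsilon^{d-1}$) when $S=\{v\}$; here the $o(1)$ is uniform on $\{z^{(1)}\ge\beta\}$, and there is the crude bound $\mathbb{P}_z(C_\epsilon^\vee)\le c\,\kappa(\epsilon)H_S(z)$ for $\epsilon\le\beta/2$ and $z^{(1)}\ge\beta$, obtained from $z^{(1)}-y^{(1)}\in[\tfrac12 z^{(1)},z^{(1)}]$ and $|z-y|^2\ge\tfrac14((z^{(1)})^2+|z^{(2:d)}-y^{(2:d)}|^2)$ on $S_\epsilon$. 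Taking $z=x$, $\mathbb{P}_x(C_\epsilon^\vee)\sim\kappa(\epsilon)H_S(x)$.

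\emph{Step 2 (Markov property at $t$ and the limit $\epsilon\to0$).} The key structural observation is that on $\{t<\tau_\beta^\vee\}$ one has $\underline{X}^{(1)}_t\ge\beta$, so for $\epsilon<\beta$ the closest-reach level $X^{(1)}_{G(\tau_0^\vee)}=\underline{X}^{(1)}_{\tau_0^\vee-}$ falls in $(0,\epsilon)$ precisely when the infimum of $X^{(1)}$ over $[t,\tau_0^\vee)$ does, and on that event $X_{G(\tau_0^\vee)}$ equals the point of closest reach to $\mathbb{H}_0$ of the post-$t$ trajectory $(X_{t+s})_{s\ge0}$ before that trajectory first enters $\mathbb{H}_{0\downarrow}$, which is a measurable functional of the post-$t$ path. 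As $A,\{t<\tau_\beta^\vee\}\in\mathcal{F}_t$ and $X^{(1)}_t\ge\beta>0$ there, the Markov property at $t$ gives, for $\epsilon<\beta/2$,
\[
\mathbb{P}_x\big(A,\,t<\tau_\beta^\vee,\,C_\epsilon^\vee\big)=\mathbb{E}_x\Big[\mathbf{1}_{A,\,t<\tau_\beta^\vee}\,\mathbb{P}_{X_t}\big(X_{G(\tau_0^\vee)}\in S_\epsilon\big)\Big].
\]
Dividing by $\kappa(\epsilon)$, invoking the pointwise convergence and the uniform bound of Step~1, and applying dominated convergence with dominating function $c\,H_S(X_t)\mathbf{1}_{t<\tau_\beta^\vee}$ --- integrable since $H_S$ is bounded on $\{z^{(1)}\ge\beta\}$ (for $S=\mathbb{H}_0$ via the scaling identity $H_{\mathbb{H}_0}(z)=c_d|z^{(1)}|^{\alpha/2-1}$, again using $\alpha/2<1$) --- yields
\[
\lim_{\epsilon\to0}\mathbb{P}_x\big(A,\,t<\tau_\beta^\vee\mid C_\epsilon^\vee\big)=\frac{\mathbb{E}_x[\mathbf{1}_{A,\,t<\tau_\beta^\vee}H_S(X_t)]}{H_S(x)}=\mathbb{E}_x\!\left[\mathbf{1}_{A,\,t<\tau_\beta^\vee}\,\frac{H_S(X_t)}{H_S(x)}\right].
\]

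\emph{Step 3 (the limit $\beta\to0$ and conclusion).} Since $X^{(1)}$ does not creep downwards, $X^{(1)}_{G(\tau_0^\vee)}=\underline{X}^{(1)}_{\tau_0^\vee-}>0$ $\mathbb{P}_x$-a.s., so for $\beta$ below this random level one has $\tau_\beta^\vee=\tau_0^\vee$; hence $\tau_\beta^\vee\uparrow\tau_0^\vee$ and $\mathbf{1}_{\{t<\tau_\beta^\vee\}}\uparrow\mathbf{1}_{\{t<\tau_0^\vee\}}$ as $\beta\downarrow0$ ($t$ being a.s.\ not an atom of $\tau_0^\vee$). Monotone convergence in the last display produces \eqref{radon1}. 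The consistency in $t$ of its right-hand side --- equivalently the $\mathbb{P}_x$-martingale property of $(H_S(X_t)\mathbf{1}_{\{t<\tau_0^\vee\}})_{t\ge0}$ --- is inherited from the consistency of the conditional laws $\mathbb{P}_x(\,\cdot\mid C_\epsilon^\vee)$, so $\mathbb{P}^\vee_x$ is indeed a law on $\mathbb{D}(\mathbb{R}^+\times\mathbb{R}^d)$ with lifetime $\tau_0^\vee$.

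I expect Step~2 to be the main obstacle: one must justify rigorously that, on $\{t<\tau_\beta^\vee\}$ with $\epsilon<\beta$, the conditioning event $C_\epsilon^\vee$ depends on the post-$t$ path alone and that the relevant closest reach is attained strictly after $t$, which rests on a careful comparison of the running infimum of $X^{(1)}$ before and after time $t$. The remaining work is routine once the kernel of Theorem~\ref{thm:pcr} and the descending ladder potential \eqref{stablesubpot} are available, since these furnish all the uniform estimates needed for the two interchanges of limit and expectation.
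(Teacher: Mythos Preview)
The paper does not actually prove this theorem: it appears in the concluding remarks as one of two results explicitly ``le[ft] as exercises to the reader, with full proofs available in \cite{PhD}'', and is presented as the half-space analogue of the construction in \cite{tsogi}. Your argument is precisely the one that template dictates --- Theorem~\ref{thm:pcr} supplies the density of the point of closest reach, from which the small-$\epsilon$ asymptotics of $\mathbb{P}_z(C_\epsilon^\vee)$ follow, and then the Markov property at $t$ together with dominated and monotone convergence yields the Doob $h$-transform \eqref{radon1}. This is correct and is almost certainly the intended proof.

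Two small clean-ups. In Step~2 the uniformity of the $o(1)$ on $\{z^{(1)}\ge\beta\}$ is not needed: pointwise convergence of $\mathbb{P}_{X_t}(C_\epsilon^\vee)/\kappa(\epsilon)\to H_S(X_t)$ together with your crude domination $\mathbb{P}_z(C_\epsilon^\vee)\le c\,\kappa(\epsilon)H_S(z)$ already suffices for dominated convergence. In Step~3 the atom remark is superfluous: once you have $\tau_\beta^\vee=\tau_0^\vee$ for all $\beta<\underline{X}^{(1)}_{\tau_0^\vee-}$, the indicator $\mathbf{1}_{\{t<\tau_\beta^\vee\}}$ literally equals $\mathbf{1}_{\{t<\tau_0^\vee\}}$ for such $\beta$, so the monotone limit is immediate without any appeal to the distribution of $\tau_0^\vee$.
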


\section*{Acknowledgements}
SM is supported by a scholarship from the EPSRC Centre for Doctoral Training in Statistical Applied Mathematics at Bath (SAMBa), under the project EP/S022945/1.

\bibliography{bibliography}{}
\bibliographystyle{abbrv}

\end{document}